\documentclass[12pt]{article} 
\usepackage{xypic,amssymb,latexsym,amsfonts,amsthm,verbatim,amscd,ifthen,amsmath} 
\usepackage{color} 

\begin{document}
\title{Poset Resolutions and Lattice-Linear Monomial Ideals\\ \vspace{.1in} with an appendix by\\Alexandre Tchernev} 
\author{Timothy B.P. Clark}
\date{\today}
%\address{Department of Mathematics and Statistics, University at Albany, Albany, NY 12222}
%\email{tc7999@albany.edu}

\newtheorem{theorem}[equation]{Theorem} 
\newtheorem{lemma}[equation]{Lemma} 
\newtheorem{proposition}[equation]{Proposition} 
\newtheorem{corollary}[equation]{Corollary} 
\newtheorem{observation}[equation]{Observation} 
\newtheorem{conjecture}[equation]{Conjecture} 
\newtheorem{example}[equation]{Example} 
\newtheorem{definition}[equation]{Definition} 
\newtheorem{remark}[equation]{Remark} 
\newtheorem{remarks}[equation]{Remarks} 
\newtheorem{notation}[equation]{Notation} 
\newtheorem*{itheorem}{Theorem}
\newtheorem*{acknowledgments}{Acknowledgments}
\newenvironment{indented}{\begin{list}{}{}\item[]}{\end{list}} 

\renewcommand{\:}{\! :\ } 
\newcommand{\p}{\mathfrak p} 
\newcommand{\m}{\mathfrak m}
\newcommand{\g}{\gamma} 
\newcommand{\lra}{\longrightarrow} 
\newcommand{\ra}{\rightarrow} 
\newcommand{\altref}[1]{{\upshape(\ref{#1})}} 
\newcommand{\bfa}{\boldsymbol{\alpha}} 
\newcommand{\bfb}{\boldsymbol{\beta}} 
\newcommand{\bfg}{\boldsymbol{\gamma}} 
\newcommand{\bfd}{\boldsymbol{\delta}} 
\newcommand{\bfM}{\mathbf M} 
\newcommand{\bfN}{\mathbf N}
\newcommand{\bfI}{\mathbf I} 
\newcommand{\bfC}{\mathbf C} 
\newcommand{\bfB}{\mathbf B} 
\newcommand{\bfD}{\mathbf D}
\newcommand{\bfG}{\mathbf G}
\newcommand{\bsfC}{\bold{\mathsf C}} 
\newcommand{\bsfT}{\bold{\mathsf T}}
\newcommand{\mc}{\mathcal} 
\newcommand{\smsm}{\smallsetminus} 
\newcommand{\ol}{\overline} 
\newcommand{\twedge}
           {\smash{\overset{\mbox{}_{\circ}}
                           {\wedge}}\thinspace} 
\newcommand{\pring}{\Bbbk[x_1,\ldots,x_n]}
\newcommand{\irr}{(x_1,\ldots,x_n)}
\newcommand{\Z}{\textup{Z}}
\newcommand{\D}{\textbf{\textup{D}}}
\newcommand{\B}{\textup{B}}
\newcommand{\Po}{\mathcal{P}}
\newcommand{\La}{\mathcal{L}}
\newcommand{\Ho}{\widetilde{H}}
\newcommand{\scr}[1]{\mathfrak{#1}}
\newcommand{\pr}{\textup{proj}}
\newcommand{\sx}[1]{\textbf{\textup{G}}_{#1}}
\newcommand{\cplx}[1]{\Gamma_{#1}}
\newcommand{\ld}{\lessdot}
\newcommand{\rk}{\textup{rk}}
\newcommand{\crk}[2]{\rk(#2)-\rk(#1)}
\newcommand{\mbb}[1]{\mathbb{#1}}
\newcommand{\G}{\textbf{G}}
\newcommand{\ds}{\displaystyle}
\newcommand{\sgn}{\textup{sgn}}
\newcommand{\Sy}{\Sigma}
\newcommand{\sd}{\textbf{sd}}
\newcommand{\unsd}{\textbf{unsd}}
\newcommand{\str}[2]{\textup{St}(#1,#2)}
\newcommand{\CC}{\widetilde{\mathcal{C}}_\bullet}
\newcommand{\lcm}{\textup{lcm}}
\newcommand{\id}{\textup{id}}
\newcommand{\HH}{H} 

\newcommand{\mysection}[1]
{\section{#1}\setcounter{equation}{0}
             \numberwithin{equation}{section}}

\newcommand{\mysubsection}[1]
{\subsection{#1}\setcounter{equation}{0}
                \numberwithin{equation}{subsection}}

\newcommand{\mysubsubsection}[1]
{\subsubsection{#1}\setcounter{equation}{0}
                \numberwithin{equation}{subsubsection}}

\maketitle

\begin{abstract}
We introduce the class of lattice-linear 
monomial ideals and use the LCM-lattice to give an 
explicit construction for their minimal free 
resolution.  The class of lattice-linear 
ideals includes (among others) the class of monomial 
ideals with linear free resolution and the class of 
Scarf monomial ideals.  Our main tool is a new 
construction by Tchernev that produces from a 
map of posets $\eta:P\lra\mbb{N}^n$ a sequence 
of multigraded modules and maps.
\end{abstract}

\mysection{Introduction}
Let $R=\pring$ be a polynomial ring where $\Bbbk$ is a field, 
considered with its standard $\mbb{Z}^n$-grading (multigrading) 
and let $N$ be an ideal in $R$ generated by monomials.  

In \cite{GPW}, Gasharov, Peeva and Welker express 
the Betti numbers of $R/N$ using the homology groups of 
of the LCM-lattice $L_N$ of $N$.  They further show  
that the isomorphism class of $L_N$ determines the structure 
of the minimal free resolution of $R/N$.  Motivated by 
these results, we introduce the class of 
\emph{lattice-linear} monomial ideals.  
A lattice-linear ideal has the mapping structure of 
its minimal free resolution encoded in the covering 
relations of its LCM-lattice.  In our main result, 
Theorem \ref{LLT}, we construct explicitly the minimal 
free resolution of any lattice-linear ideal from its 
LCM-lattice.  

The class of lattice-linear ideals contains extensively studied subclasses,  
including the class of monomial ideals with a linear free resolution 
\cite{HH,HHMT,HRW,JW,MS} and the class of Scarf ideals \cite{BPS}.
For each of these two subclasses, minimal free resolutions have been 
constructed using different techniques which are also distinct from 
the one described in this paper.  

The key tool used to produce the lattice-linear resolutions 
is a new construction due to Tchernev which takes  
a partially ordered set (poset) $P$ as its input and produces 
a sequence of $\Bbbk$-vector spaces and $\Bbbk$-linear maps 
$C_\bullet(P)$.  Two variations of this construction are discussed, 
each based upon calculating the homology of the poset $P$.  
Under sufficiently desirable conditions (for example when the atoms 
of $P$ form a crosscut), the two variations are canonically 
isomorphic.  In general, the sequence $C_\bullet$ 
need not be a complex, nor need it be exact.  

When there exists a map of posets $\eta:P\lra\mbb{N}^n$, we 
homogenize the sequence $C_\bullet(P)$ to produce a 
sequence of multigraded modules and multigraded morphisms 
$\mbb{F}(\eta)$ which ``approximates'' a free resolution 
of the monomial ideal whose generators have their degrees given 
by the images of the atoms.  The poset map utilized 
in our main result to construct the minimal free resolution of 
a lattice-linear ideal is the degree map, $\deg:L_N\lra\mbb{N}^n$, 
which sends a monomial to its multidegree.  

\mysection{Poset Combinatorics}\label{Comb}
Let $(P,\le)$ be a finite poset with least 
element $\hat{0}$.  A totally ordered subset  $\sigma\subseteq{P}$ 
which has the form $\alpha_0<\cdots<\alpha_k$ is called a 
\emph{chain of length $k$}, and for $\alpha\in{P}$, the rank of an 
element $\alpha$ is 
$\rk(\alpha)=\sup\{l:\alpha_0<\cdots<\alpha_l=\alpha\}$.  
A subset of $P$ comprised entirely of elements which are pairwise 
incomparable is called an anti-chain.  An element $\beta\in{P}$ 
is \emph{covered} by $\alpha$ (which we write $\beta\ld\alpha$) 
when $\beta<\alpha$ and there exists no $\gamma\in{P}$ such 
that $\beta<\gamma<\alpha$.  An open interval in $P$ is denoted 
$(\beta,\alpha)=\{\sigma:\beta<\alpha_i<\alpha \textrm{ for all $\alpha_i\in\sigma$ }\}$, 
with closed and half-open intervals denoted similarly.  
Recall that the set of atoms of $P$ is $$S=\{a\in{P}:\hat{0}\ld{a}\},$$ 
and setting $\rk(\hat{0})=0$, we have $\rk(a)=1$ for every $a\in{S}$.  
The poset $P$ is said to be \emph{ranked} when 
$\rk(\alpha)=\rk(\beta)+1$ for every $\beta\ld{\alpha}\in{P}$.  
When they exist, the meet (greatest lower bound) and join 
(least upper bound) of a subset $A\subseteq{P}$ are 
denoted as $\wedge A$ and $\vee A$, respectively.  

For the purpose of topological analysis of $P$, recall that the order 
complex of a poset is the simplicial complex $\Delta(P)$ whose 
$k$-dimensional faces are in one-to-one correspondence with the length 
$k$ chains of $P$.  As is standard, whenever discussing topological 
properties of $P$, we are implicitly referring to the topological 
properties of $\Delta(P)$.  Another topological object of interest 
is the the crosscut simplicial complex $\Gamma(P,C)$ associated to $P$.  
Recall that a set $C\subset{P}$ is called a crosscut if it satisfies 
the following three properties; 
\begin{enumerate}
\item $C$ is an anti-chain, 
\item For every finite chain $\sigma$ in $P$ there exists some element 
      in $C$ which is comparable to each element in $\sigma$,
\item If $A\subseteq{C}$ is bounded in $P$, then either the join 
      $\vee{A}$ or the meet $\wedge{A}$ exists in $P$.  
\end{enumerate}
For a crosscut $C$, the crosscut simplicial complex $\Gamma(P,C)$ is 
defined as the collection of all subsets of $C$ which are bounded in $P$.  
We introduce a family of simplicial complexes indexed by 
the elements of $P$ as follows.

\begin{definition} 
$\phantom{3}$
\begin{enumerate}
\item For $\lambda\in{P}$, set $\sx{\lambda}$ to be the full 
simplex on the set $$S_\lambda=\{a\in{S}:a\le{\lambda}\}$$ 
and for $\alpha\in P$ set $$\cplx{\alpha}=\bigcup_{\lambda\ld\alpha}\sx{\lambda}.$$  
\item For each $\lambda\ne\beta\ld\alpha$ we have 
$\sx{\lambda}\cap\sx{\beta}\subset\cplx{\lambda}$ 
and decompose $\cplx{\alpha}$ as 
$$
\cplx{\alpha}
=\sx{\lambda}\cup\left(\bigcup_{\stackrel{\beta\ld\alpha}{\lambda\ne\beta}}\sx{\beta}\right).
$$  
For a fixed $\lambda\ld\alpha$ set 
$$
\cplx{{\alpha,\lambda}}
=\sx{\lambda}\cap\left(\bigcup_{\stackrel{\beta\ld\alpha}{\lambda\ne\beta}}\sx{\beta}\right)
\subset\cplx{\lambda}.
$$  
\end{enumerate}
\end{definition}

The order complex of an open interval 
$\Delta_\alpha:=\Delta(\hat{0},\alpha)$ 
may be realized in a similar way.  

\begin{definition}
$\phantom{3}$
\begin{enumerate}
\item For $\alpha\in{P}$, set $\D_\alpha=\Delta(\hat{0},\alpha]$, 
so that $$\Delta_\alpha=\bigcup_{\lambda\ld\alpha}\D_\lambda.$$  
\item For each $\lambda\ne\beta\ld\alpha$ we have 
$\D_\lambda\cap{\D_\beta}\subset\Delta_\lambda$ 
and decompose $\Delta_\alpha$ as 
$$
\Delta_\alpha=
\D_\lambda\cup\left(\bigcup_{\stackrel{\beta\ld\alpha}{\lambda\ne\beta}}\D_\beta\right),$$ 
and set 
$$\Delta_{\alpha,\lambda}=
\D_{\lambda}\cap\left(\bigcup_{\stackrel{\beta\ld\alpha}{\lambda\ne\beta}}
\D_\beta\right)\subset\Delta_\lambda.$$
\end{enumerate}
\end{definition}
 
It is advantageous to consider the families of 
simplicial complexes $\{\Delta_\alpha:\alpha\in P\}$ 
and $\{\Gamma_\alpha:\alpha\in P\}$ when analyzing 
the topology of $P$, as one family may have structural 
advantages over the other.  

\begin{remark}\label{CC}
If the set of atoms $S$ forms a crosscut in $P$, then 
the sets $S_\lambda$ for $\lambda\ld\alpha$ are the 
maximal subsets of $S$ which are bounded in the open interval 
$(\hat{0},\alpha)$ of $P$.  Each simplicial complex 
$\cplx{\alpha}$ is therefore identical to the crosscut 
complex $\Gamma((\hat{0},\alpha),S_\alpha)$.  Invoking 
the Crosscut Theorem (\cite[Theorem 10.8]{BjBrick}), 
then $\Delta_\alpha$ and $\cplx{\alpha}$ 
are homotopic for every $\hat{0}\ne\alpha\in{P}$.
\end{remark}
%\mysubsection{Utilizing the order complex}\label{utilOrder}
Turning our attention to the family of simplicial 
complexes $$\{\Delta_\alpha:\alpha\in{P}\}$$ we 
describe a sequence of vector spaces and vector space maps 
$$C_\bullet(P):\cdots\lra{C_i}
\stackrel{\varphi_i}{\lra}C_{i-1}\lra
\cdots\lra{C_1}\stackrel{\varphi_1}{\lra}C_0$$ 
whose structure is determined by the simplicial 
complexes $\Delta_\alpha$.  

\begin{definition}
$\phantom{3}$ 
\begin{enumerate}
\item Set $C_0=C_0(P)=\Ho_{-1}(\emptyset,\Bbbk)\cong\Bbbk$.
\item For $i\ge{1}$, set $C_{i,\alpha}=C_{i,\alpha}(P)=\Ho_{i-2}(\Delta_\alpha,\Bbbk)$ and 
$$C_i=C_i(P)=\ds \bigoplus_{\alpha\in{P}\smsm\{\hat{0}\}}C_{i,\alpha}.$$
\end{enumerate}
\end{definition}

\begin{remark}
When $i=1$ and $\alpha\in{S}$, we have $\Delta_\alpha=\D_{\hat{0}}=\emptyset$
and thus, $C_{1,\alpha}=\Ho_{-1}(\emptyset,\Bbbk)\cong\Bbbk$.  
If $i=1$ and $\alpha\notin{S}$ then 
$\Delta_\alpha=\ds \bigcup_{\lambda\ld\alpha}\D_\lambda\ne\emptyset$ and hence $C_{1,\alpha}=\Ho_{-1}(\Delta_\alpha,\Bbbk)=0$.  
Therefore, 
  $$\ds
    C_1=
    \bigoplus_{\alpha\in{S}}C_{1,\alpha}=
    \bigoplus_{\alpha\in{S}}\Ho_{-1}(\emptyset,\Bbbk)\cong
    \bigoplus_{\alpha\in{S}}\Bbbk.
  $$
\end{remark}

Next, given $\lambda\ld\alpha$, we consider the Mayer-Vietoris sequence 
in reduced homology for the triple 
  $$
  \ds \left(
  \D_\lambda,\hspace{.1in}
  \bigcup_{\stackrel{\beta\ld\alpha}{\lambda\ne\beta}}\D_\beta,\hspace{.1in}
  \Delta_\alpha
  \right).
  $$  
We write $\iota:\Ho_{i-3}(\Delta_{\alpha,\lambda},\Bbbk)\lra\Ho_{i-3}(\Delta_\lambda,\Bbbk)$ 
for the map induced in homology by the inclusion map and
$$
\delta_{i-2}^{\alpha,\lambda}:
\Ho_{i-2}(\Delta_\alpha,\Bbbk)\lra
\Ho_{i-3}(\Delta_{\alpha,\lambda},\Bbbk)
$$ 
for the connecting homomorphism from the Mayer-Vietoris sequence.  
Recall this homomorphism takes the class 
$[c]\in\Ho_{i-2}(\Delta_\alpha,\Bbbk)$ to the class 
$[d_{i-2}(c')]\in\Ho_{i-3}(\Delta_{\alpha,\lambda},\Bbbk)$ where 
$c'+c''=c\in\widetilde{\mathcal{C}}_{i-2}(\Delta_\alpha,\Bbbk)$, 
and $c'$ and $c''$ are any components of $c$ that are 
supported by $\D_\lambda$ and 
$\ds\bigcup_{\lambda\ne\mu\ld\alpha}\D_\mu$ 
respectively.  Here, $d$ is the usual boundary map.    

We can now proceed with the definition of the maps 
$\varphi_{i}:C_i\lra{C_{i-1}}$ for the sequence $C_\bullet(P)$. 

\begin{definition}
$\phantom{3}$
\begin{enumerate}
\item Define $\varphi_1:C_1\lra{C_0}$ componentwise by  
$\varphi_1\vert_{C_{1,\alpha}}=\id_{\Ho_{-1}(\emptyset,\Bbbk)}$.
\item For $i\ge 2$ define $\varphi_i:C_i\lra C_{i-1}$ componentwise by   $$\varphi_{i}\vert_{C_{i,\alpha}}=\sum_{\lambda\ld\alpha}\varphi_{i}^{\alpha,\lambda}$$
where $$\varphi_{i}^{\alpha,\lambda}:C_{i,\alpha}\lra{C_{{i-1},\lambda}}$$ 
is the composition $\varphi_{i}^{\alpha,\lambda}=\iota\circ\delta_{i-2}^{\alpha,\lambda}$.
\end{enumerate}
\end{definition}

%\mysubsection{Utilizing the complexes $\Gamma_\alpha$}
The construction above may be performed using the family of subcomplexes 
$$\{\Gamma_\alpha:\alpha\in{P}\}.$$  A priori, the sequence $C_\bullet(P,\Gamma)$ 
obtained in this way is different from the the sequence $C_\bullet(P)$ 
based on the family of subcomplexes $$\{\Delta_\alpha:\alpha\in{P}\}.$$  
However, as the meaning will always be clear from the context, we 
will use the same notation for the components and the 
same notation for the maps in both sequences.  To be specific, we 
describe the sequence of vector spaces and vector space maps 
$$
C_\bullet(P,\Gamma):\cdots\lra{C_i}
\stackrel{\varphi_i}{\lra}C_{i-1}\lra
\cdots\lra{C_1}\stackrel{\varphi_1}{\lra}C_0
$$
as follows.  

\begin{definition}
$\phantom{3}$  
\begin{enumerate}
\item Set $C_0=C_0(P,\Gamma)=\Ho_{-1}(\emptyset,\Bbbk)\cong\Bbbk$.  
\item For $i\ge{1}$, set  
$C_{i,\alpha}=C_{i,\alpha}(P,\Gamma)=\Ho_{i-2}(\cplx{\alpha},\Bbbk)$ and 
$$C_i=C_i(P,\Gamma)=\ds \bigoplus_{\alpha\in{P}\smsm\{\hat{0}\}}C_{i,\alpha}.$$
\end{enumerate}
\end{definition}

\begin{remark}
Again, when $i=1$ and $\alpha\in{S}$, we have $\cplx{\alpha}=\sx{\hat{0}}=\emptyset$
and thus, $C_{1,\alpha}=\Ho_{-1}(\emptyset,\Bbbk)\cong\Bbbk$.  
If $i=1$ and $\alpha\notin{S}$ then 
$\cplx{\alpha}=\ds \bigcup_{\lambda\ld\alpha}\sx{\lambda}\ne\emptyset$ and hence $C_{1,\alpha}=\Ho_{-1}(\cplx{\alpha},\Bbbk)=0$.  
Therefore, 
  $$\ds
    C_1=
    \bigoplus_{\alpha\in{S}}C_{1,\alpha}=
    \bigoplus_{\alpha\in{S}}\Ho_{-1}(\emptyset,\Bbbk)\cong
    \bigoplus_{\alpha\in{S}}\Bbbk.
  $$
\end{remark}
  
As before, given $\lambda\ld\alpha$, we consider the Mayer-Vietoris 
sequence in reduced homology for the triple 
  $$\ds
  \left(\sx{\lambda},\hspace{.1in}
  \bigcup_{\lambda\ne\beta\ld\alpha}\sx{\beta},\hspace{.1in}
  \cplx{\alpha}\right).
  $$  
We set $\iota:\Ho_{i-3}(\cplx{{\alpha,\lambda}},\Bbbk)\lra\Ho_{i-3}(\cplx{\lambda},\Bbbk)$ 
to be the map induced in homology by the inclusion map and
  $$\delta_{i-2}^{\alpha,\lambda}:
  \Ho_{i-2}(\cplx{\alpha},\Bbbk)\lra
  \Ho_{i-3}(\cplx{{\alpha,\lambda}},\Bbbk)$$ 
to be the connecting homomorphism from the Mayer-Vietoris sequence.  
%This detail may eventually be commented out.
This homomorphism takes the class $[c]\in\Ho_{i-2}(\cplx{\alpha},\Bbbk)$ to the class $[d_{i-2}(c')]\in\Ho_{i-3}(\cplx{{\alpha,\lambda}},\Bbbk)$ 
where $c'+c''=c\in\mathcal{C}_{i-2}(\cplx{\alpha},\Bbbk)$, 
and $c'$ and $c''$ are components of $c$ that are supported by $\sx{\lambda}$ 
and by $\ds\bigcup_{\lambda\ne\beta\ld\alpha}\sx{\mu}$ respectively.  
Again, $d$ is the usual boundary map.  

\begin{definition}
$\phantom{3}$
\begin{enumerate}
\item Define $\varphi_1:C_1\lra{C_0}$ componentwise by  
$\varphi_1\vert_{C_{1,\alpha}}=\id_{\Ho_{-1}(\emptyset,\Bbbk)}$. 
\item For $i\ge 2$ define $\varphi_i:C_i\lra C_{i-1}$ componentwise by   $$\varphi_{i}\vert_{C_{i,\alpha}}=\sum_{\lambda\ld\alpha}\varphi_{i}^{\alpha,\lambda}$$
where $$\varphi_{i}^{\alpha,\lambda}:C_{i,\alpha}\lra{C_{{i-1},\lambda}}$$ 
is the composition $\varphi_{i}^{\alpha,\lambda}=\iota\circ\delta_{i-2}^{\alpha,\lambda}$. 
\end{enumerate}
\end{definition}

\section{Properties of $C_\bullet(P)$ and $C_\bullet(P,\Gamma)$}\label{Cproperties}
The sequences $C_\bullet(P)$ and $C_\bullet(P,\Gamma)$ 
are not necessarily complexes of vector spaces, and even if 
one is a complex, it need not be exact.  While necessary 
and sufficient conditions for the construction to produce an exact 
complex are not known, if the poset is ranked, $C_\bullet(P)$ is 
a complex of vector spaces (Proposition \ref{itsacomplex} in the appendix).  
Depending on the structure of each of 
the collections, it may be more advantageous to use one instead 
of the other.  When the poset has sufficiently good structure, we prove

\begin{proposition}\label{CCdiagram} Suppose that the set of atoms $S$ forms a 
crosscut in $P$, and let $f:P\lra\Gamma(P,S)$ be the 
homotopy equivalence given in \cite{BjBrick}.  Then for 
every $\beta\ld\alpha\in{P}$ and every $i\ge 1$ the map $f$ 
induces canonically a commutative diagram 
\begin{displaymath}
\begin{CD}
\Ho_i(\Delta_\alpha,\Bbbk) @>\mathfrak{f}_i>> \Ho_i(\cplx{\alpha},\Bbbk) \\
@V \varphi_i VV @V \varphi_i VV \\
\Ho_{i-1}(\Delta_\beta,\Bbbk) @>\mathfrak{f}_{i-1}>> \Ho_{i-1}(\cplx{\beta},\Bbbk) \\
\end{CD}
\end{displaymath}
where $\mathfrak{f}_*$ is an isomorphism in homology.
\end{proposition}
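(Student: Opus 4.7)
The two versions of $\varphi_i$ are constructed from the same recipe---a Mayer--Vietoris connecting homomorphism $\delta_{i-2}^{\alpha,\beta}$ composed with an inclusion-induced map $\iota$---applied to homotopy-equivalent families of simplicial complexes. The plan is to exhibit the Björner--Brick equivalence $f$ as a morphism of the Mayer--Vietoris triple defining $\varphi_i^{\alpha,\beta}$ on the $\Delta$-side onto the corresponding triple on the $\Gamma$-side, and then invoke functoriality of the Mayer--Vietoris long exact sequence. By Remark \ref{CC}, $\Delta_\alpha$ and $\cplx{\alpha}$ are homotopy equivalent via the Crosscut Theorem applied to $(\hat 0,\alpha)$ with crosscut $S_\alpha$; the same reasoning applied to each $\lambda\ld\alpha$, with the crosscut $S_\lambda$ of $(\hat 0,\lambda)$, produces homotopy equivalences $\mathfrak{f}_\lambda:\D_\lambda\lra\sx{\lambda}$. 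Analogous arguments yield equivalences for the union $\bigcup_{\substack{\lambda\ld\alpha\\ \lambda\ne\beta}}\D_\lambda$ and for its intersection $\Delta_{\alpha,\beta}$ with $\D_\beta$.

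Next, naturality of the Björner--Brick construction---the assertion that the restriction of the ambient $f$ to a sub-order-complex of the form $\D_\lambda$ agrees up to homotopy with the intrinsically constructed $\mathfrak{f}_\lambda$---turns $f$ into a morphism of Mayer--Vietoris triples
$$
\left(\D_\beta,\ \bigcup_{\substack{\lambda\ld\alpha\\ \lambda\ne\beta}}\D_\lambda,\ \Delta_\alpha\right)\lra\left(\sx{\beta},\ \bigcup_{\substack{\lambda\ld\alpha\\ \lambda\ne\beta}}\sx{\lambda},\ \cplx{\alpha}\right).
$$
Functoriality of the Mayer--Vietoris long exact sequence then forces $\mathfrak{f}_*$ to commute with $\delta_{i-2}^{\alpha,\beta}$, while functoriality of inclusion-induced maps forces it to commute with $\iota$; composing the two commutative squares gives the desired commutativity for $\varphi_i^{\alpha,\beta}=\iota\circ\delta_{i-2}^{\alpha,\beta}$ when $i\ge 2$. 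The case $i=1$ is immediate, since both horizontal arrows specialize to the identity on $\Ho_{-1}(\emptyset,\Bbbk)\cong\Bbbk$.

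The hard part will be verifying this naturality of $f$ under sub-order-ideal inclusions. A Nerve-Lemma-style proof of the Crosscut Theorem constructs $f$ from an acyclic carrier that sends a bounded subset $A\subseteq S$ to the subcomplex $\Delta((\hat 0,\vee A])$ of $\Delta(P)$. For $A\subseteq S_\lambda$ one has $\vee A\in(\hat 0,\lambda]$, so the carrier value is unchanged whether $f$ is built within the ambient poset or within the subposet $(\hat 0,\lambda]$; this is exactly what is needed to identify the restricted and intrinsic equivalences up to homotopy. Compatibility with the remaining pieces of the Mayer--Vietoris decomposition follows by the same mechanism, as each such piece is itself a union of subcomplexes of the form $\Delta((\hat 0,\mu])$ swept out by the carrier.
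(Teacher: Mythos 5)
Your overall strategy is essentially the paper's: everything reduces to showing that $f$ is compatible with the decompositions $\Delta_\alpha=\bigcup_{\lambda\ld\alpha}\D_\lambda$ and $\cplx{\alpha}=\bigcup_{\lambda\ld\alpha}\sx{\lambda}$, after which commutativity with $\varphi_i=\iota\circ\delta_{i-2}^{\alpha,\beta}$ is just naturality of the Mayer--Vietoris sequence (the paper unwinds this naturality by hand on explicit chains, using that $f_\sharp$ commutes with the boundary $d$). The difference --- and the gap --- lies in how that compatibility is established.

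First, ``agrees up to homotopy with the intrinsic $\mathfrak{f}_\lambda$'' is not sufficient to make $f$ a morphism of Mayer--Vietoris triples. The connecting homomorphism is defined by splitting a cycle according to the cover $\{\D_\beta,\ \bigcup_{\lambda\ne\beta}\D_\lambda\}$, so for the square involving $\delta$ to commute you need $f_\sharp$ to send chains supported on $\D_\beta$ to chains supported on $\sx{\beta}$ (and similarly for the complementary union) on the nose, or at least via a chain homotopy that itself respects the cover; a homotopy of the total map $\Delta_\alpha\to\cplx{\alpha}$ can move chains across the pieces and change the value of $\delta$. Second, the carrier you invoke points the wrong way: $A\mapsto\Delta((\hat 0,\vee A])$ assigns to a face of the crosscut complex a subcomplex of the order complex, so it carries a homotopy inverse $\Gamma(P,S)\to\Delta(P)$, not the map $f:\Delta(P)\to\Gamma(P,S)$ in the statement; moreover $\vee A$ need not exist for a general crosscut (axiom (3) only guarantees a join \emph{or} a meet, and for a set of atoms the meet is typically $\hat 0$). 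To make your argument work you would need an acyclic carrier from $\Delta(P)$ to $\Gamma(P,S)$ --- for instance $\sigma\mapsto$ the full simplex on $S_{\min\sigma}$ --- verify that Bj\"orner's $f$ is carried by it, and note that this carrier sends simplices of $\D_\lambda$ into $\sx{\lambda}$, which yields the on-the-nose containment and, via the carrier theorem, the identification with the intrinsic equivalences. The paper sidesteps all of this by computing $f_\sharp=\unsd_\sharp\circ h_\sharp\circ\sd_\sharp$ explicitly: $f_\sharp(\{a_0<\cdots<a_k\})$ is either $0$ or the face $\{s_{a_k},\ldots,s_{a_0}\}$ with each $s_{a_j}\in S_{a_j}\subseteq S_{a_k}$, from which $f_\sharp(\CC(\D_\lambda))\subseteq\CC(\sx{\lambda})$ is immediate.
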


\begin{corollary}\label{LatticeCor}
If $P$ is a lattice or a geometric semilattice, 
then $C_\bullet(P)$ and $C_\bullet(P,\Gamma)$ 
are canonically isomorphic.
\end{corollary}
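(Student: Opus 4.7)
The plan is to reduce the corollary directly to Proposition~\ref{CCdiagram} by verifying, in each of the two hypotheses, that the set of atoms $S$ forms a crosscut in $P$. Once this is established, the proposition supplies canonical componentwise isomorphisms, which assemble into the desired canonical isomorphism of sequences.

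First I would check the three crosscut axioms. Property~(1) holds in any finite poset with $\hat 0$ since distinct atoms are pairwise incomparable. For property~(2), a chain $\sigma$ containing $\hat 0$ is comparable with every atom, while a chain in $P\smsm\{\hat 0\}$ has an atom below its minimum by finiteness, and that atom is then comparable with every element of $\sigma$. Property~(3) is the decisive one. When $P$ is a lattice, every bounded subset $A\subseteq S$ has a join (and a meet), so (3) is immediate. When $P$ is a geometric semilattice, any bounded subset of atoms admits a join by the very definition of geometric semilattice in the sense of Wachs--Walker. Hence $S$ is a crosscut in either case and Proposition~\ref{CCdiagram} applies to $P$.

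Next I would transport the proposition's output to the sequences. For every $\alpha\in P\smsm\{\hat 0\}$ and every $i\ge 1$, Proposition~\ref{CCdiagram} supplies canonical isomorphisms
$$
\mathfrak{f}_i^\alpha\: \Ho_{i-2}(\Delta_\alpha,\Bbbk)\lra\Ho_{i-2}(\cplx{\alpha},\Bbbk),
$$
that is, canonical isomorphisms $C_{i,\alpha}(P)\to C_{i,\alpha}(P,\Gamma)$, and for each $\beta\ld\alpha$ these intertwine the components $\varphi_i^{\alpha,\beta}$ of the two differentials. Summing over $\alpha$ yields vector space isomorphisms $\mathfrak{f}_i:C_i(P)\to C_i(P,\Gamma)$ for $i\ge 1$, with $\mathfrak{f}_0$ the defining identification on $C_0\cong\Bbbk$. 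Because $\varphi_i|_{C_{i,\alpha}}=\sum_{\lambda\ld\alpha}\varphi_i^{\alpha,\lambda}$ in both $C_\bullet(P)$ and $C_\bullet(P,\Gamma)$, the commutativity of each square in Proposition~\ref{CCdiagram} implies the global identity $\mathfrak{f}_{i-1}\circ\varphi_i=\varphi_i\circ\mathfrak{f}_i$, and canonicity passes to direct sums.

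The main obstacle I expect is verifying crosscut axiom~(3) for geometric semilattices, since the usual definition of such a semilattice does not literally guarantee joins of arbitrary subsets, and one must invoke the defining property that bounded subsets of atoms admit joins together with the fact that $\wedge A$ can be used for unbounded subsets when meets exist. Once this technicality is dispatched, the remainder of the argument is essentially bookkeeping on top of Proposition~\ref{CCdiagram}.
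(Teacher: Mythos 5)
Your proposal is correct and follows the same route as the paper, which simply observes that the atoms of a lattice or geometric semilattice form a crosscut and then invokes Proposition \ref{CCdiagram}; you have merely spelled out the verification of the crosscut axioms and the assembly of the componentwise isomorphisms, both of which the paper leaves implicit.
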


\begin{proof}
The set of atoms of $P$ forms a crosscut.
\end{proof}

\begin{proof}[Proof of Proposition \ref{CCdiagram}]
Recall that the barycentric subdivision of a simplicial complex $\Omega$ 
may be realized as the simplicial complex $\sd(\Omega)=\Delta(P(\Omega))$.
Write $$P_{\ge{x}}=\{y\in{P}:y\ge{x}\}$$ and suppose that $\sigma$ 
is a chain of $P$.  An order reversing map of posets 
$g:P(\Delta(P))\lra P(\Gamma(P,S))$ is defined in \cite{BjBrick} as 
$$\sigma\mapsto\left\{x\in{S}:\sigma\in\Delta({P_{\ge{x}}})\right\}.$$  
This map induces in the usual way a simplicial map on the corresponding 
order complexes $h:\sd(\Delta(P))\lra\sd(\Gamma(P,S))$ which gives 
rise to a chain map $$h_\sharp:\CC(\sd(\Delta(P)))\lra\CC(\sd(\Gamma(P,S))).$$

The homotopy equivalence $f$ in \cite{BjBrick} can be defined on the level 
of chains as $f_\sharp=\unsd_\sharp\circ h_\sharp\circ\sd_\sharp$ where 
$$
\CC(\Delta(P))
\stackrel{\sd_\sharp}{\lra}
\CC(\sd(\Delta(P)))
\stackrel{h_\sharp}{\lra}
\CC(\sd(\Gamma(P,S)))
\stackrel{\unsd_\sharp}{\lra}
\CC(\Gamma(P,S)),
$$
the simplicial map $\unsd$ is defined by fixing a total ordering on the set 
of atoms $S$ and sending $S_a=\{x\in{S}\mid x\le a\}$ to its minimum 
element $s_a=\min S_a$ under this ordering; and $\unsd_\sharp$ is the 
induced chain map.  Recall that the \emph{star} of a vertex $v$ in a 
simplicial complex $K$, denoted $\str{v}{K}$, is the union of the 
interiors of the simplices in $K$ that have $v$ as a vertex.  Given 
any vertex $S_a$ of $\sd(\Gamma(P,S))$ then the vertex $s_a$ of 
$\Gamma(P,S)$ satisfies 
$$
\str{S_a}{\sd(\Gamma(P,S)}\subset\str{s_a}{\Gamma(P,S)}.
$$
Invoking \cite[Lemma 15.1]{Mun}, it follows that $\unsd$ is a 
simplicial approximation to the identity, and therefore 
via the algebraic subdivision theorem \cite[Theorem 17.2]{Mun} 
$\unsd_\sharp$ is a chain map that is a homotopy inverse to the 
subdivision map $\Gamma(P,S)\lra\sd(\Gamma(P,S))$.

Next, recall that under barycentric subdivision a face 
$\sigma=\{a_0,\ldots,a_k\}\in\Delta(P)$ with $a_0<\cdots<a_k\in{P}$ 
has image 
\begin{equation}\label{subdiv}
\sd_\sharp(\sigma)=
\sum_{\rho\in\Sy_{k+1}} 
\varepsilon_\rho
\left\{
\{a_{\rho(k)}\},\{a_{\rho(k-1)},a_{\rho(k)}\},\cdots,\{a_{\rho(0)},\ldots,a_{\rho(k)}\}
\right\}
\end{equation}
where $\Sy_{k+1}$ is the group 
of permutations on the set $\{0,1,\ldots,k\}$ and $\varepsilon_\rho$ 
denotes the sign of the permutation $\rho$.  
Applying $h$ to the chain 
$$
\left\{
\{a_{\rho(k)}\},
\{a_{\rho(k-1)},a_{\rho(k)}\},
\cdots,
\{a_{\rho(0)},\ldots,a_{\rho(k)}\}
\right\}
$$
yields the face
$$
\left\{
S_{a_{\rho(k)}},
S_{a_{\rho(k-1)}},
\cdots,
S_{a_{\rho(0)}}
\right\}.
$$
Unless $\rho$ is the identity of $\Sy_{k+1}$, 
this face has dimension less than or equal to $k-1$.  
It follows that under the chain map $h_\sharp$, 
the sum in equation (\ref{subdiv}) has image 
\begin{equation}\label{survterm}
h_\sharp(\sd_\sharp(\sigma))=
 \left\{
         S_{a_k},S_{a_{k-1}},\cdots,S_{a_0}
 \right\}.
\end{equation}
Considering the simplicial map 
$\unsd:\sd(\Gamma(P,C))\lra\Gamma(P,C)$, 
if $s_{a_t}=s_{a_{t-1}}$ for some $1\le t\le k$, then the face 
$$
\left\{s_{a_k},\cdots,s_{a_0}\right\}=
\unsd\left(\{S_{a_k},S_{a_{k-1}},\cdots,S_{a_0}\}\right)
$$ 
has dimension less than or equal to $k$.  Thus, 
$$\unsd_\sharp\left(\left\{S_{a_k},S_{a_{k-1}},\cdots,S_{a_0}\right\}\right)=0$$
except when $s_{a_k}<s_{a_{k-1}}<\cdots<s_{a_0}$.  
Therefore, 
\begin{displaymath}
f_\sharp(\sigma)=
\unsd_\sharp(h_\sharp(\sd_\sharp(\sigma)))= 
           \left\{ 
           \begin{array}{cl}
           \left\{s_{a_k},\cdots,s_{a_0}\right\} 
           & 
           \textrm{when } s_{a_k}<\cdots<s_{a_0} \\
           0 & \textrm{otherwise}
           \end{array}
           \right.
\end{displaymath}
It is clear from this description that for each $\alpha$ 
and each $\sigma\in\Delta(\hat{0},\alpha]$ 
we have $f_\sharp(\sigma)\in\CC(\sx{\alpha})$ and therefore 
the horizontal maps in our diagram are well defined and 
are isomorphisms by Remark \ref{CC}.
%If $\sigma$ is a face with the property that 

We now turn to proving the commutativity of the diagram.
Fix $\beta\ld\alpha$, suppose that $[\pi]$ is a homology 
class in $\Ho_i(\Delta_\alpha)$ and write 
\begin{eqnarray}\label{sepsum}
\pi & = & \sum_{\dim(\sigma)=i} 
      c_\sigma\cdot\sigma\\
& = & \sum_{a^\sigma_i\le\beta} 
      c_\sigma\cdot\sigma
      +
      \sum_{a^\sigma_i\not\le\beta} 
      c_\sigma\cdot\sigma.\nonumber 
\end{eqnarray}
for a representative of this class where 
$\sigma=\{a^\sigma_0,\ldots,a^\sigma_i\}$ is
oriented by $a^\sigma_0<\cdots<a^\sigma_i$ 
and $c_\sigma\in\Bbbk$ is a scalar.  
Applying the isomorphism $\mathfrak{f}$, we have 
\begin{eqnarray*}
\mathfrak{f}([\pi]) & = & [f_\sharp(\pi)]\\
& = & \left[
      \sum_{\dim(\sigma)=i} 
      c_\sigma\cdot f_\sharp(\sigma)
      \right]\\
& = & \left[
      \sum_{a^\sigma_i\le\beta}
      c_\sigma\cdot f_\sharp(\sigma)
      +
      \sum_{a^\sigma_i\not\le\beta} 
      c_\sigma\cdot f_\sharp(\sigma)
      \right].
\end{eqnarray*}
Since the terms appearing in 
the first summand are faces in $\sx{\beta}$ 
and the terms appearing in the second summand are
faces in $$\ds \bigcup_{\beta\ne\gamma\ld\alpha}\sx{\gamma},$$
then applying the map $\varphi_i$
to $[f_\sharp(\pi)]$ yields
$$
\varphi_i(\mathfrak{f}([\pi]))=
\left[
\sum_{a_i^\sigma\le\beta}
c_\sigma\cdot d_i\circ f_\sharp(\sigma)
\right].
$$
Again taking $[\pi]\in\Ho_i(\Delta_\alpha)$,
we apply $\varphi_i$ and achieve the homology class
$$
\varphi_i([\pi])=
\left[
\sum_{a_i^\sigma\le\beta} c_\sigma\cdot d_i(\sigma)
\right].
$$
%Any term in the sum (\ref{sepsum}) which has $a_i^\sigma<\beta$ does not contribute to the homology class above since such a term is contained entirely within the face $\D_\beta$ and is therefore the boundary of of some subcomplex of $\D_\beta$. 
Lastly, we apply the isomorphism $\mathfrak{f}$ to achieve 
\begin{eqnarray*}
\mathfrak{f}(\varphi_i([\pi])) & = & \left[
\sum_{a_i^\sigma\le\beta}
c_\sigma\cdot f_\sharp\circ d_i(\sigma)
\right]\\
& = & 
\left[
\sum_{a_i^\sigma\le\beta}
c_\sigma\cdot d_i\circ f_\sharp(\sigma)
\right]\\
& = & 
\varphi_i([f_\sharp(\pi)]))
\end{eqnarray*}
so that the proposition is proven.
%since $f_\sharp$ is a chain map and therefore commutes with the differential $d_i$.
\end{proof}

\mysection{Poset Resolutions}\label{PosetRes}
Let $R=\pring$, let $\m=\irr$ be the unique graded 
maximal ideal of $R$, and 
$x^\alpha=x_{1}^{\alpha_1}\cdots{x_{n}^{\alpha_n}}$.  
We appeal to the standard $\mbb{Z}^n$-grading (mulitigrading) 
of $R$ and use the notation of Section \ref{Comb} for ordering 
in the partially ordered set $\mbb{Z}^n$.  We use the degree 
map $x^\alpha\mapsto\alpha$ and identify 
the monomials in $R$ with the elements of 
$\mbb{N}^n\subset\mathbb{Z}^n$. 

Suppose that $\eta:P\lra\mbb{N}^n$ is a map of partially 
ordered sets, and $S$ is the set of atoms of $P$.  Let $N$ be 
the ideal in $R$ generated by the monomials $$G(N)=\{x^{\eta(a)}:a\in{S}\}.$$  
The complex of vector spaces $C_\bullet(P)$ constructed in Section \ref{Comb} 
and associated to $P$ is homogenized using the map $\eta$ to 
produce 
$$
\mbb{F}=\mbb{F}(\eta):
  \cdots
  \lra 
  F_t
  \stackrel{\partial_t}{\lra} 
  F_{t-1}
  \lra
  \cdots
  \lra
  F_1
  \stackrel{\partial_1}{\lra} 
  F_0,
$$
a sequence of free multigraded 
$R$-modules and multigraded $R$-module homomorphisms which 
approximates a free resolution of the multigraded module $R/N$.  
This homogenization is carried out by constructing 
$F_0=R\otimes_{\Bbbk}C_0$ and grading the result 
with $\deg(x^\alpha\otimes{v})=\alpha$ for each $v\in C_0$.  
Similarly, for $i\ge{1}$, we set 
$$
\ds F_i=
\bigoplus_{{\hat{0}}\ne\lambda\in P}F_{i,\lambda}=
\bigoplus_{{\hat{0}}\ne\lambda\in P}R\otimes_{\Bbbk}C_{i,\lambda}
$$ 
where the grading is defined as 
$\deg(x^\alpha\otimes{v})=\alpha+\eta(\lambda)$ for each 
$v\in C_{i,\lambda}$.  The differential in this sequence 
of multigraded modules is defined 
componentwise in homological degree 1 as  
$$
\partial_1\arrowvert_{F_{1,\lambda}}
=x^{\eta(\lambda)}\otimes\varphi_1\arrowvert_{C_{1,\lambda}}
$$ 
and for $i\ge{1}$, the map 
$\partial_i:F_i\lra F_{i-1}$ is 
defined as 
$$
\ds \partial_i\arrowvert_{F_{i,\alpha}}
=\sum_{\lambda\ld\alpha}\partial^{\alpha,\lambda}_i
$$ 
where 
$\partial^{\alpha,\lambda}_i:F_{i,\alpha}\lra{F_{i-1,\lambda}}$ 
takes the form 
$\partial^{\alpha,\lambda}_i=x^{\alpha-\eta(\lambda)}\otimes\varphi_{i}^{\alpha,\lambda}$ 
for $\lambda\ld\alpha$.  

We are now in a position to make 
\begin{definition}
If\/ $\mbb{F}(\eta)$ is an acyclic complex of 
multigraded modules, then we say that it is 
a \emph{poset resolution} of the ideal $N$.   
\end{definition}

\begin{example}\label{Taylor} 
$\phantom{3}$
\begin{enumerate}
\item The Taylor resolution \cite{T} can be realized 
as a poset resolution where $P=\mathcal{B}_r$, 
the Boolean lattice.  The map 
$$\eta:\mathcal{B}_r\lra\mbb{N}^n$$ is defined on a 
lattice element $I\subseteq\{1,\ldots,r\}$ via 
$I\mapsto\deg(m_I)$, where $m_I=\lcm(m_i\in{G(N)}:i\in{I})$.
\item An arbitrary \emph{stable} monomial ideal is shown to admit
a minimal poset resolution in \cite{Clark} using a poset of 
Eliahou-Kervaire admissible symbols.  
\end{enumerate}
\end{example}

We conclude this section with the following general remark
on notation.  Any sequence $\mbb{P}$ of morphisms of 
free multigraded modules can be decomposed as 
$$\mbb{P}=\bigoplus_{\alpha\in{\mathbb{Z}^n}}\mbb{P}_\alpha$$ 
where each $\mbb{P}_\alpha$ is a sequence of maps of vector spaces, 
and is called the \emph{(multigraded) strand of \/ $\mbb{P}$ in degree $\alpha$}.  
We denote by $(\mbb{P}_\alpha)_i$ the $i^{th}$ component of the sequence
$\mbb{P}_\alpha$.  Using the multigrading, it is clear that
$$
(\m\mbb{P})_\alpha=
\sum_{\beta\ld\alpha}x^{\alpha-\beta}{\mbb{P}_\beta}\subset{\mbb{P}_{\alpha}}.
$$  
Further, we will identify $x^{\alpha-\beta}{\mbb{P}_\beta}$ 
with $\mbb{P}_\beta$ so that we may write 
$\mbb{P}_{\beta}\subset{\mbb{P}_\alpha}$ for $\beta\ld\alpha$.  
This allows us to consider 
$\mbb{P}_{\gamma}\subset{\mbb{P}_{\alpha}}$ for all $\gamma<\alpha$.  
In addition, we may now write
$$
(\m\mbb{P})_\alpha
=\sum_{\beta\ld\alpha}{\mbb{P}_\beta}
=\sum_{\gamma<\alpha}{\mbb{P}_\gamma}\subset{\mbb{P}_{\alpha}}.
$$  

\mysection{Lattice-Linear Monomial Ideals}\label{LL}
We now turn to the class of ideals that are the focus 
of this paper.  Recall that the \emph{LCM-lattice} 
associated to a monomial ideal $N$ is the
the set $L_N$ of least common multiples of the subsets of 
the set of minimal generators of $N$ (where by convention, 
$1$ is considered to be the least common 
multiple of the empty set) and ordering in $L_N$ is 
given by divisibility.  Recall that we identify monomials 
with their degree in $\mbb{N}^n$.  In particular, we consider 
$L_N$ as a sublattice of $\mbb{N}^n$.  As an immediate consequence 
of \cite[Theorem 3.1a]{BHres}, if the $i^{th}$ multigraded 
Betti number $\beta_{i,\alpha}(R/N)\ne 0$, then $\alpha\in L_N$.  
In particular, this means that if $B_i$ is any multihomogeneous 
basis of the free module $F_i$ in the minimal free resolution 
$\mbb{F}$ of $R/N$ then $\deg(v)\in L_N$ for each $v\in B_i$. 

\begin{definition}\label{LLDef}
Let $\mbb{F}$ be a minimal multigraded free resolution of $R/N$.
We say that $N$ is \emph{lattice-linear} if multigraded
bases $B_k$ of $F_k$ can be fixed for all $k$ 
so that for any $i\ge 1$ and any $v\in{B_i}$ the differential 
$$\partial^{\mbb{F}}(v)=\sum_{v'\in{B_{i-1}}}m_{v,v'}\cdot{v'}$$ 
has the property that if the coefficient $m_{v,v'}$ is 
nonzero then $\deg(v')\ld\deg(v)\in{L_N}$.
\end{definition}

\begin{remark}
The notion of lattice-linearity is dependent upon 
the characteristic of the ground field $\Bbbk$.  
For example, the ideal 
\begin{eqnarray*}
N & = & \langle x_1x_2x_3, x_1x_3x_5, x_1x_4x_5, x_2x_3x_4, x_2x_4x_5,\\
  &   & \phantom{f}x_1x_2x_6, x_1x_4x_6, x_2x_5x_6, x_3x_4x_6, x_3x_5x_6 \rangle
\end{eqnarray*}
in $R=\Bbbk[x_1,x_2,x_3,x_4,x_5,x_6]$ is lattice-linear 
if and only if $\textup{char}(\Bbbk)\ne2$.  
\end{remark}

We now state our main result.  

\begin{theorem}\label{LLT}
Define the map $\deg:L_N\lra\mathbb{N}^n$ by sending a 
monomial $m\in{L_N}$ to its degree 
$\deg(m)=(\alpha_1,\ldots,\alpha_n)\in\mathbb{N}^n$.  
Then the monomial ideal $N$ is lattice-linear 
if and only if $\mbb{F}(\deg)$ is its minimal free resolution.  
\end{theorem}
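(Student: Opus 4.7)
The theorem is a biconditional; I would dispatch the forward direction ($\mbb{F}(\deg)$ being the minimal resolution $\Rightarrow$ $N$ lattice-linear) immediately from the construction. Taking $B_k$ to be a multihomogeneous basis of each summand $F_{k,\alpha}=R\otimes C_{k,\alpha}(L_N)$, the componentwise formula $\partial_k|_{R\otimes C_{k,\alpha}}=\sum_{\lambda\ld\alpha}x^{\alpha-\lambda}\otimes\varphi_k^{\alpha,\lambda}$ shows that any $v'\in B_{k-1}$ appearing in $\partial(v)$ with nonzero coefficient must have $\deg(v')\ld\deg(v)$ in $L_N$.

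For the reverse direction, let $\mbb{F}$ be a minimal multigraded free resolution of $R/N$ with bases $B_k$ exhibiting lattice-linearity, and set $C'_{k,\alpha}=\Bbbk\cdot B_{k,\alpha}$. Then $F_k=\bigoplus_{\alpha}R\otimes_\Bbbk C'_{k,\alpha}$, and lattice-linearity yields a decomposition $\partial_k|_{R\otimes C'_{k,\alpha}}=\sum_{\lambda\ld\alpha}x^{\alpha-\lambda}\otimes\psi_k^{\alpha,\lambda}$ for $\Bbbk$-linear maps $\psi_k^{\alpha,\lambda}\colon C'_{k,\alpha}\to C'_{k-1,\lambda}$. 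The plan is to produce isomorphisms $\phi_{k,\alpha}\colon C'_{k,\alpha}\to C_{k,\alpha}(L_N)$ intertwining $\psi$ with $\varphi$; homogenizing these yields an isomorphism of $R$-complexes $\mbb{F}\cong\mbb{F}(\deg)$, which simultaneously shows that $\mbb{F}(\deg)$ is a complex, is acyclic, and equals the minimal resolution (minimality being immediate because every entry $x^{\alpha-\lambda}$ of its differential lies in $\m$).

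The dimensions of domain and target already match by the Gasharov--Peeva--Welker formula $\dim C'_{k,\alpha}=\beta_{k,\alpha}(R/N)=\dim\Ho_{k-2}(\Delta_\alpha,\Bbbk)=\dim C_{k,\alpha}(L_N)$. I would construct $\phi$ by induction on $\rk(\alpha)$: the base case for atoms is immediate since both spaces are one-dimensional and both $\psi_1$ and $\varphi_1$ are identities. For the inductive step at $\alpha$, consider the multigraded strand $\mbb{F}_\alpha$, an acyclic vector-space complex whose terms decompose (after identifying $x^{\alpha-\beta}\otimes v$ with $v$) as $\bigoplus_{\beta\le\alpha}C'_{k,\beta}$ with differentials assembled from the $\psi$'s. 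Under the inductive identifications for $\beta<\alpha$, this strand becomes an acyclic complex whose lower pieces are the reduced homology groups of the $\Delta_\beta$, and the Mayer-Vietoris long exact sequence for the triple $(\D_\lambda,\bigcup_{\mu\ne\lambda\ld\alpha}\D_\mu,\Delta_\alpha)$ forces $C'_{k,\alpha}\cong\Ho_{k-2}(\Delta_\alpha,\Bbbk)$ via an isomorphism $\phi_{k,\alpha}$ that carries $\psi_k^{\alpha,\lambda}$ to $\varphi_k^{\alpha,\lambda}$.

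The main obstacle is the inductive step: using acyclicity of the strand $\mbb{F}_\alpha$ to pin down the maps $\psi_k^{\alpha,\lambda}$ as the specific Mayer-Vietoris connecting maps $\varphi_k^{\alpha,\lambda}$, rather than as arbitrary $\Bbbk$-linear maps between spaces of the right dimension. This rigidity---provided by combining exactness of $\mbb{F}_\alpha$ with the Mayer-Vietoris long exact sequence associated to a cover of $\Delta_\alpha$ by the closed intervals $\D_\lambda$---is what upgrades the numerical match of dimensions into a genuine isomorphism of complexes.
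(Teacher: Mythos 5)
Your forward direction and your dimension count via Gasharov--Peeva--Welker are fine, and your overall strategy (build an explicit isomorphism of complexes $\mbb{F}\cong\mbb{F}(\deg)$, from which acyclicity and minimality of $\mbb{F}(\deg)$ both follow) is exactly the right one. But the step you yourself flag as ``the main obstacle'' is the entire content of the theorem, and your proposal does not supply the idea needed to overcome it. The difficulty is that the strand $\mbb{F}_\alpha$ is a purely algebraic exact complex whose terms $V_{k,\beta}$ are merely abstractly isomorphic to homology groups of simplicial complexes; there is no Mayer--Vietoris sequence attached to it. Exactness plus matching dimensions gives you \emph{some} isomorphism of exact complexes, but it does not force the individual components $\psi_k^{\alpha,\lambda}$ to be carried to the specific connecting maps $\varphi_k^{\alpha,\lambda}$, nor does it force an isomorphism compatible with the direct-sum decomposition over $\beta\le\alpha$ and with the identifications already fixed at lower rank. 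To ``upgrade the numerical match'' you need a chain-level topological model of the strand, and that is precisely what is missing.

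The device the paper uses is the Taylor resolution $\mbb{T}$: its multigraded strands are literally reduced simplicial chain complexes, so that $\sum_{\beta\ld\alpha}\mbb{T}_\beta=\widetilde{\mathcal{C}}_\bullet(\Gamma_\alpha,\Bbbk)$ and $\sum_{\beta\ld\alpha,\,\beta\ne\gamma}\mbb{T}_{\gamma\wedge\beta}=\widetilde{\mathcal{C}}_\bullet(\Gamma_{\alpha,\gamma},\Bbbk)$ (one works with the crosscut complexes $\Gamma_\alpha$ rather than $\Delta_\alpha$ for exactly this reason, invoking Corollary \ref{LatticeCor} to pass between the two). One fixes an embedding of $\mbb{F}$ as a direct summand of $\mbb{T}$; exactness of $\mbb{T}_\alpha$ makes the connecting map $\mu_i:\HH_i\bigl(\mbb{T}_\alpha/\sum_{\beta\ld\alpha}\mbb{T}_\beta\bigr)\to\HH_{i-1}\bigl(\sum_{\beta\ld\alpha}\mbb{T}_\beta\bigr)=\Ho_{i-2}(\Gamma_\alpha,\Bbbk)$ an isomorphism, and lattice-linearity guarantees $\partial^{\mbb{F}}(v)\in\sum_{\beta\ld\alpha}\mbb{T}_\beta$, so that $\psi_i(v)=1\otimes[\partial^{\mbb{F}}(v)]$ is a well-defined map $V_{i,\alpha}\to C_{i,\alpha}$ with an explicit formula --- no induction on rank is needed. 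The compatibility of $\psi$ with the Mayer--Vietoris maps $\iota\circ\delta^{\alpha,\gamma}$ is then a direct diagram chase at the level of Taylor cycles. Without the Taylor complex (or an equivalent chain-level comparison), the appeal to ``rigidity from Mayer--Vietoris'' in your inductive step is an assertion, not an argument, so as written the proof is incomplete at its crucial point.
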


\begin{comment}
\begin{corollary}\label{linear}
Every monomial ideal which has a linear minimal free 
resolution is a lattice-linear monomial ideal.
\end{corollary}

\begin{proof}
Suppose that $N$ is an ideal with linear resolution 
and aiming for a contradiction, that $N$ is not lattice-linear.  
Then there exists $i>0$ and $e\in F_i$ of degree 
$\alpha$ such that in the expansion of $\partial^{\mbb{F}}(e)$ 
the element $e'\in F_{i-1}$ has multidegree $\beta$ 
which is not covered by $\alpha\in{L_N}$.  Therefore, 
there exists $\gamma\in{L_N}$ such that $\beta<\gamma<\alpha$.  
However, since $N$ has a linear resolution, 
$\deg(\alpha)=\deg(\beta)+1$, and there can be no 
multidegree $\gamma$ which fits this criteria for 
comparibility in $L_N$.  Hence, $N$ is lattice-linear.
\end{proof}
\end{comment}

We postpone the proof of Theorem \ref{LLT} in favor of 
two examples of lattice-linear ideals.  

First, the class of lattice-linear ideals clearly contains those ideals 
with a linear free resolution, whose minimal free resolutions 
have been constructed in \cite{JW} using tools from Discrete 
Morse Theory.  Our methods allow us to provide a considerably 
simpler and more transparent approach to constructing these 
minimal free resolutions.  

For our second example we recall from \cite{BPS} the Scarf 
simplicial complex 
$$
\Delta_N=
\{I\subseteq\{1,\ldots,r\}
\mid
{m_I}\ne{m_J}\textup{ for all } J\subseteq\{1,\ldots,r\} \textup{ other than } I\}.
$$  
When $I\in\Delta_N$, then $I$ is uniquely determined 
from $m_I$ as the set $I=\{i\mid{m_i}<m_I\}$.  
The ideal $N$ is called a \emph{Scarf} ideal if 
its minimal free resolution is supported on $\Delta_N$.  
For instance, the so-called \emph{generic} 
\cite{BPS,MS} ideals are Scarf.  Note that when $N$ is 
Scarf, the differential in its minimal free resolution 
takes the unique basis element $e_I$ 
labeled by the monomial $m_I$ to 
$$
\sum_{j=1}^{|I|}(-1)^{j+1}
\frac{m_I}{m_{I\smsm\{i_j\}}}
\cdot
{e_{I\smsm\{i_j\}}}
$$ where $I=\{i_1,\ldots,i_{|I|}\}$.  

\begin{proposition}\label{scarf}
Every Scarf ideal is a lattice-linear monomial ideal.  
In particular, for $\eta:P(\Delta_N)\lra\mbb{N}^n$ where 
$I\mapsto \deg(m_I)$, the complex $\mbb{F}(\eta)$ is the 
minimal free resolution of $R/N$.  
\end{proposition}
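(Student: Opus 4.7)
The plan is to identify the complex $\mbb{F}(\eta)$ directly with the standard Scarf complex of $R/N$ and then to read off lattice-linearity from the structure of $\Delta_N$.

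For the identification, I would fix $I \in \Delta_N$ with $|I| \geq 2$ and analyze the interval $(\hat 0, I)$ in $P(\Delta_N)$, whose elements are the proper non-empty subsets of $I$. The order complex $\Delta_I$ is then the barycentric subdivision of the boundary of the full simplex on $I$, so $\Delta_I \simeq S^{|I|-2}$. Consequently $C_{i,I} = \Ho_{i-2}(\Delta_I,\Bbbk)$ is one-dimensional when $i = |I|$ and vanishes otherwise, while for atoms $C_{1, \{k\}} = \Ho_{-1}(\emptyset,\Bbbk) \cong \Bbbk$. Homogenizing through $\eta$ produces precisely the free modules $F_i = \bigoplus_{|I| = i} R \cdot e_I$ with $\deg(e_I) = \deg(m_I)$ that underlie the Scarf complex.

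Next I would verify that the differentials agree. Fix a cover $I' = I \smsm \{i_j\} \ld I$. The piece $\D_{I'}$ is a cone over $\Delta_{I'}$ and hence contractible; the complementary union $\bigcup_{I' \ne K' \ld I} \D_{K'}$ is the barycentric subdivision of the simplex on $I$ with one facet removed, and so is also contractible; and unpacking the definitions shows that the intersection $\Delta_{I,I'}$ coincides with $\Delta_{I'}$ itself. The Mayer--Vietoris connecting map $\delta_{|I|-2}^{I,I'}$ is therefore an isomorphism $\Ho_{|I|-2}(\Delta_I,\Bbbk) \to \Ho_{|I|-3}(\Delta_{I'},\Bbbk)$ and the inclusion $\iota$ is the identity, so $\varphi_{|I|}^{I,I'}$ is an isomorphism $\Bbbk \to \Bbbk$. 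Multiplying by $x^{\eta(I) - \eta(I')} = m_I / m_{I \smsm \{i_j\}}$ and fixing compatible generators of the homology groups yields the Scarf differential, so $\mbb{F}(\eta)$ equals the Scarf complex, which by hypothesis is the minimal free resolution of $R/N$.

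For lattice-linearity, I would use the basis $\{e_I : I \in \Delta_N\}$; the only nonzero coefficients in $\partial e_I$ occur at $e_{I \smsm \{i_j\}}$, so it suffices to show $m_{I \smsm \{i_j\}} \ld m_I$ in $L_N$. Suppose instead that $m_{I \smsm \{i_j\}} < m_K < m_I$ in $L_N$ for some $K \subseteq \{1,\ldots,r\}$. Replacing $K$ by $\{k : m_k \mid m_K\}$ leaves $m_K$ unchanged, and the Scarf characterization $I = \{i : m_i \mid m_I\}$ together with $m_K \mid m_I$ forces $K \subseteq I$, while $m_{I \smsm \{i_j\}} \mid m_K$ forces $I \smsm \{i_j\} \subseteq K$. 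Hence $K$ equals either $I \smsm \{i_j\}$ or $I$, contradicting the strict inequalities. The main obstacle is tracking the signs in the identification $\varphi_{|I|}^{I,I'} \leftrightarrow (-1)^{j+1}$; I would sidestep a direct chain-level Mayer--Vietoris computation by observing that both $\mbb{F}(\eta)$ and the Scarf complex are sequences built on the same free modules whose $(I,I')$-component of $\partial$ has the same monomial factor times a scalar in $\Bbbk^{\times}$, so a diagonal change of basis obtained by rescaling each chosen generator of $\Ho_{|I|-2}(\Delta_I,\Bbbk)$ absorbs the signs and produces an isomorphism of multigraded complexes.
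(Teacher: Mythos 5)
Your proof of the core claim --- that every Scarf ideal is lattice-linear --- is essentially the paper's own argument: suppose $m_{I\smsm\{i_j\}}<m_K<m_I$ in $L_N$, saturate $K$ to $\{k: m_k\mid m_K\}$, and use the Scarf characterization $I=\{i: m_i\mid m_I\}$ to squeeze $K$ between $I\smsm\{i_j\}$ and $I$; this matches the paper line for line and is correct. Where you genuinely diverge is the ``in particular'' clause. The paper's proof stops at lattice-linearity, implicitly deferring the identification of $\mbb{F}(\eta)$ with the minimal free resolution to Theorem \ref{LLT}; you instead compute $\mbb{F}(\eta)$ directly from the face poset $P(\Delta_N)$. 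Your topology is right: since $\Delta_N$ is closed under subsets, $(\hat{0},I)$ is the poset of proper nonempty subsets of $I$, so $\Delta_I\simeq S^{|I|-2}$ and only $C_{|I|,I}\cong\Bbbk$ survives; and contractibility of the two Mayer--Vietoris pieces together with $\Delta_{I,I'}=\Delta_{I'}$ does make each $\varphi_{|I|}^{I,I'}$ an isomorphism of one-dimensional spaces. This extra work buys something real, since Theorem \ref{LLT} is stated for $\deg:L_N\lra\mbb{N}^n$ while the proposition uses $\eta:P(\Delta_N)\lra\mbb{N}^n$, and these posets differ in general.

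The one under-justified step is the final claim that a diagonal rescaling ``absorbs the signs.'' Rescaling $e_I\mapsto u_Ie_I$ multiplies the $(I,I')$ unit $\epsilon_{I,I'}$ by $u_I/u_{I'}$, so you are asserting that the discrepancy units form a coboundary; two complexes on the same free modules whose differentials agree entrywise up to units need not be isomorphic in general. The assertion is true here, but it needs the following chain of reasoning: $\mbb{F}(\eta)$ is a complex because $P(\Delta_N)$ is ranked (Proposition \ref{itsacomplex}); comparing $d^2=0$ for $\mbb{F}(\eta)$ and for the Scarf complex on each diamond $I'''\subset I',I''\subset I$ (all such diamonds lie in $\Delta_N$, and the Scarf entries are nonzero) yields $\epsilon_{I,I'}\epsilon_{I',I'''}=\epsilon_{I,I''}\epsilon_{I'',I'''}$; and this cocycle condition lets one define $u_I$ consistently by induction on $|I|$. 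Supply that argument (or simply fall back on Theorem \ref{LLT} once lattice-linearity is established, as the paper intends) and your proof is complete.
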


\begin{proof}
Suppose that $N$ is a Scarf ideal, set $L_N$ as the 
LCM-lattice of $N$ and let $\mbb{F}$ denote the minimal 
free resolution of $R/N$ with differential $\partial^\mbb{F}$.  
Fix a homological degree $p>0$ and an $I\subseteq\{1,\ldots,r\}$.  
For every $J\subset{I}$, the monomial $m_J<m_I$ in $L_N$.  
Supposing that $N$ is not lattice-linear, there exists 
$J=\{a_1,\ldots,\widehat{a_j},\ldots,a_p\}$ so that the 
coefficient of $e_J$ in the expansion of $\partial^\mbb{F}(e_I)$ 
is nonzero, and yet $m_J$ is not covered by $m_I$ in the 
lattice $L_N$.  Thus, there exists $m\in{L_N}$ so that $m_J<m<m_I$.  
Since $N$ is Scarf, $I$ and $J$ are uniquely determined from 
$m_I$ and $m_J$.  By definition $m=\lcm(m_{a_1},\ldots,m_{a_t})$ 
for some $\{a_1,\ldots,a_t\}$, and it follows that 
$J\subset\{a_1,\ldots,a_t\}\subseteq{I}=J\cup\{a_j\}$, forcing 
$\{a_1,\ldots,a_t\}=I$.  Therefore, $m=m_I$, and $m_J\ld{m_I}$.  
Hence $N$ is lattice-linear.
\end{proof}

\mysection{Proof of Theorem \ref{LLT}}\label{POMT}

\begin{proof}
It is clear that if $\mbb{F}(\eta)$ is the 
minimal free resolution then $N$ is lattice-linear.
It remains to show that lattice linearity implies 
that $\mbb{F}(\eta)$ is a resolution of $R/N$.    
We remark that since $L_N$ is a lattice, its set of atoms 
$G(N)$ forms a crosscut, 
%thus by Theorem \ref{CrossCut} 
so that Corollary \ref{LatticeCor} implies 
$C_\bullet(L_N,\Gamma)=C_\bullet(L_N)$.  We will use 
$C_\bullet(L_N,\Gamma)$ for our computation 
of $\mbb{F}(\eta)$.

Suppose that $N$ is a lattice-linear monomial ideal with minimal 
free resolution $\mbb{F}$ and let $B_i$ be a basis for $F_i$ 
as in Definition \ref{LLDef}.  With this choice of basis, let 
$F_{i,\alpha}$ be the free submodule of $F_i$ 
spanned by the set $$B_{i,\alpha}=\{v\in{B_i}:\deg(v)=\alpha\}.$$  
Hence, $$F_i=\bigoplus_{\alpha\in L_N}F_{i,\alpha},$$ 
and in particular 
$$(\mbb{F}_\alpha)_i=\bigoplus_{\beta\le\alpha}V_{i,\beta}$$ 
where $$V_{i,\beta}=\Bbbk\langle{v:v\in{B_{i,\beta}}}\rangle$$  
and $x^{\alpha-\beta}V_{i,\beta}$ is identified with $V_{i,\beta}$.

%Since $(\mbb{F}_\alpha)_i$ is the degree $\alpha$ strand of $\mbb{F}$ in homological degree $i$ and $\mbb{F}_{i,\alpha}$ is the degree $\alpha$ summand of $\mbb{F}_i$, there is a natural inclusion $\mbb{F}_{i,\alpha}\subset(\mbb{F_\alpha})_i$.  Appealing to the multigrading, we view $\mbb{F}_\beta$ as a subcomplex of $\mbb{F}_\alpha$.  

Making use of  $\mbb{T}$, the Taylor resolution of $R/N$, 
%the multigraded Betti numbers of $R/N$ are given by
%\begin{eqnarray*}
%\beta_{i,\alpha}(R/N) & = & \dim_\Bbbk\Tor_{i}^{R}(R/N,\Bbbk)_\alpha \\
%& = & \dim_{\Bbbk}\HH_{i}(\mbb{T}_\alpha/(\m\mbb{T})_\alpha) \\
%& = & \dim_\Bbbk\HH_{i}\left(\mbb{T}_\alpha\Big/\ds \sum_{\beta\ld\alpha}{\mbb{T}_\beta}\right). \\
%\end{eqnarray*}  
consider the exact sequence 
\begin{equation}\label{tayses}
0
\lra
{\ds \sum_{\beta\ld\alpha}{\mbb{T}_\beta}}
\lra
{\mbb{T}_\alpha}\lra{\mbb{T}_\alpha\Big/ \ds \sum_{\beta\ld\alpha}{\mbb{T}_\beta}}
\lra
0.
\end{equation}
The exactness of the Taylor resolution implies 
that $\mbb{T}_\alpha$ is an exact complex of 
vector spaces for $\hat{0}\ne\alpha\in{L_N}$.  
Indeed, $\mbb{T}_\alpha$ is acyclic with 
$\HH_0(\mbb{T}_\alpha)\cong(R/N)_\alpha$ and 
$(R/N)_\alpha=0$ for $x^\alpha\in{N}$.

Passing from (\ref{tayses}) to the long exact sequence in homology, 
the connecting homomorphism yields an isomorphism 
\begin{displaymath}
\begin{CD}
\HH_{i}\left(\mbb{T}_\alpha\Big/\ds \sum_{\beta\ld\alpha}{\mbb{T}_\beta}\right)
@>\mu_i>\cong>
\HH_{i-1}\left(\ds \sum_{\beta\ld\alpha}{\mbb{T}_\beta}\right),
\end{CD}
\end{displaymath}
which takes the class 
$$\ds [\bar{v}]\in\HH_{i}\left(\mbb{T}_\alpha\Big/\sum_{\beta\ld\alpha}{\mbb{T}_\beta}\right)$$ 
to the class 
$$\ds [\partial^\mbb{T}(v)]\in\HH_{i-1}\left(\sum_{\beta\ld\alpha}{\mbb{T}_\beta}\right)$$  
whenever $\bar{v}$ is a cycle in 
$\ds\mbb{T}_\alpha\Big/\sum_{\beta\ld\alpha}{\mbb{T}_\beta}$ 
represented by an element $v\in\mbb{T}_\alpha$.

Since $\mbb{F}$ is the minimal free resolution of $R/N$, we make 
the identifications 
\begin{eqnarray*} 
\ds\HH_i\left(\mbb{F}_\alpha\Big/(\m\mbb{F})_\alpha\right)
&=& \ds\left(\mbb{F}_\alpha/(\m\mbb{F})_\alpha\right)\\
&=& \ds\left(\mbb{F}_\alpha\Big/\sum_{\beta\ld\alpha}{\mbb{F}_\beta}\right)_i \\
&=& \ds(\mbb{F}_\alpha)_i\Big/\sum_{\beta\ld\alpha}(\mbb{F}_\beta)_i \\
&=& \ds\bigoplus_{\beta\le\alpha}V_{i,\beta}\Big/\bigoplus_{\beta<\alpha}V_{i,\beta}\\
&=& V_{i,\alpha}.
\end{eqnarray*}
Fixing an embedding of $\mbb{F}$ as a direct summand of $\mbb{T}$, 
we have $\mbb{T}=\mbb{F}\bigoplus\mbb{G}$ for some split exact complex 
of multigraded free modules $\mbb{G}$, and in particular, 
$\mbb{T}_\alpha=\mbb{F}_\alpha\bigoplus\mbb{G}_\alpha$ for every 
$\alpha$.  Since the induced map of complexes 
$$
\mbb{F}_\alpha\Big/\ds \sum_{\beta\ld\alpha}{\mbb{F}_\beta}
\lra
\mbb{T}_\alpha\Big/\ds \sum_{\beta\ld\alpha}{\mbb{T}_\beta}
$$ 
is split inclusion and is an isomorphism in homology, we 
consider $V_{i,\alpha}$ as a subspace of 
$\Z_i\left(\mbb{T}_\alpha\Big/
\ds \sum_{\beta\ld\alpha}{\mbb{T}_\beta}\right)$ 
and obtain the canonical identification 
$$
\ds \Z_i\left(\mbb{T}_\alpha\Big/\ds \sum_{\beta\ld\alpha}{\mbb{T}_\beta}\right)
=V_{i,\alpha}\bigoplus\B_i\left(\mbb{T}_\alpha\Big/\ds \sum_{\beta\ld\alpha}{\mbb{T}_\beta}\right).
$$  

Recalling the definitions of $\Gamma_\alpha$ 
and $\Gamma_{\alpha,\gamma}$ we see that
$$
\ds \sum_{\beta\ld\alpha}\mbb{T}_\beta
=\widetilde{\mathcal{C}}_\bullet(\Gamma_\alpha,\Bbbk)
$$
and
$$
\ds \sum_{\substack{\beta\ld\alpha \\ \gamma\ne\beta}}
\mbb{T}_{\gamma\wedge\beta}
=\widetilde{\mathcal{C}}_\bullet(\Gamma_{\alpha,\gamma},\Bbbk).
$$
Using these identifications, we have the following diagram
for each $\gamma\ld\alpha$ and each $i\ge 2$: 
\begin{equation}\label{bigCD}
\begin{CD} 
V_{i,\alpha}
@>\partial^{\mbb{F}}\circ{i}>> 
\left(\ds \sum_{\beta\ld\alpha}{\mbb{F}_\beta}\right)_{i-1} 
@>\pr_\gamma>> 
V_{i-1,\gamma}\\
@V\textup{incl} VV @. @VV\textup{incl} V \\
\Z_i\left(\mbb{T}_\alpha\Big/\ds \sum_{\beta\ld\alpha}{\mbb{T}_\beta}\right)
@.
@.
\Z_{i-1}\left(\mbb{T}_\gamma\Big/\ds \sum_{\nu\ld\gamma}{\mbb{T}_\nu}\right)\\
@V \pr VV @. @VV\pr V \\
\HH_{i}\left(\mbb{T}_\alpha\Big/\ds \sum_{\beta\ld\alpha}{\mbb{T}_\beta}\right)
@.
@.
\HH_{i-1}\left(\mbb{T}_\gamma\Big/\ds \sum_{\nu\ld\gamma}{{\mbb{T}_\nu}}\right)\\
@V\mu_{i}V\cong V @. @V\cong V\mu_{i-1} V \\
\HH_{i-1}\left(\ds \sum_{\beta\ld\alpha}{\mbb{T}_\beta}\right)
@>\delta^{\alpha,\gamma}_{i-1}>>
\HH_{i-2}\left(\ds \sum_{\substack{\beta\ld\alpha \\ \gamma\ne\beta}}{\mbb{T}_{\gamma\wedge\beta}}\right)
@>\iota>>
\HH_{i-2}\left(\ds \sum_{\nu\ld\gamma}{\mbb{T}_\nu}\right)\\
@| @| @| \\
\Ho_{i-2}(\Gamma_{\alpha},\Bbbk)
@>\delta^{\alpha,\gamma}_{i-1}>>
\Ho_{i-3}(\Gamma_{{\alpha,\gamma}},\Bbbk)
@>\iota>>
\Ho_{i-3}(\Gamma_{\gamma},\Bbbk)
\end{CD}
\end{equation}
We claim this diagram is commutative.

Let $v\in{V_{i,\alpha}}$ so that by the assumption of 
lattice-linearity,
$$
\partial^{\mbb{F}}(v)
=\sum_{\beta\ld\alpha}v_\beta
$$
where each $v_\beta\in V_{i-1,\beta}$.  
Canonically, $\textup{incl}(v)=v$.  
Under projection, the cycle $v$ is sent to its corresponding 
class in homology, $[v]$.  As mentioned above, the connecting 
map $\mu_i$ is an isomorphism, and 
$$
\mu_i([v])
=[\partial^\mbb{T}(v)]
=[\partial^\mbb{F}(v)]
=\left[\sum_{\beta\ld\alpha}v_\beta\right].
$$  
Applying $\delta^{\alpha,\gamma}_{i-1}$, which is the 
connecting Mayer-Vietoris map, 
$$
\delta^{\alpha,\gamma}_{i-1}\left(\left[\sum_{\beta\ld\alpha}v_\beta\right]\right)
=[\partial^\mbb{T}(v_\gamma)].
$$  
Lastly, $\iota$ is the homological inclusion map and thus, 
$\iota([\partial^\mbb{T}(v_\gamma)])=[\partial^\mbb{T}(v_\gamma)]$.  

Again taking $v\in{V_{i,\alpha}}$, we appeal to the differential 
of $\mbb{F}$, and obtain 
$$(\partial^\mbb{F}\circ{i})(v)=\partial^\mbb{F}(v)=\sum_{\beta\ld\alpha}v_\beta.$$  
Projecting onto $V_{i-1,\gamma}$, we have 
$$\ds \pr_\gamma\left(\sum_{\beta\ld\alpha}v_\beta\right)=v_\gamma.$$  
The inclusion map now gives $\textup{incl}(v_\gamma)=v_\gamma$, and 
passing this cycle to homology yields $[v_\gamma]$.  Through the 
isomorphism, $\mu_{i-1}([v_\gamma])=[\partial^\mbb{T}(v_\gamma)]$, 
which completes the proof of the commutativity of the diagram.

We complete the proof of the Theorem by establishing the connection between 
lattice linearity and the poset construction.  
%Indeed, given 
%the acyclic complex $C_\bullet(L_N)$ and $i>0$, set 
%$$\ds (C_\alpha)_i=\bigoplus_{\beta\le\alpha}C_{i,\beta}$$ 
%and define the map $d_i:(C_\alpha)_i\lra(C_\alpha)_{i-1}$ as 
%$$\ds d_i(w)=\sum_{\beta\le\alpha}d_{\beta,i}(w)=\sum_{\beta\ld\alpha}d_{\beta,i}(w)$$ 
%for $w\in{(C_\alpha)_i}$.  

For each $i\ge 0$ define the isomorphism of free $R$-modules
$\psi_i:F_i\lra\mbb{F}(\eta)_i$ on a basis 
element $v\in V_{i,\alpha}\subset B_i$ by 
applying the left column of (\ref{bigCD}), thus 
$$
\psi_i(v)
=\otimes[\mu_i\circ\pr\circ\textup{incl}(v)]
=1\otimes[\partial^\mbb{F}(v)]
\in{R\otimes C_{i,\alpha}}
\subset\mbb{F}(\eta)_i.
$$ 

%which maps each $v\in{V_{i,\beta}}$ to $[\partial^\mbb{F}(v)]\in{C_{i,\beta}}$ for every $\beta\le\alpha$. 
%Fixing $\beta\le\alpha$ and choosing $v\in{V_{i,\beta}}$ then by 
%definition, $\psi_i(v)=[\partial^\mbb{F}(v)]$.  Under the map 
%$\partial_i$, the class $[\partial^\mbb{F}(v)]$ is taken to 
%$\ds \sum_{\deg(v')\ld\beta}[\partial^\mbb{F}(v')]$.  Using 
%the differential of $\mbb{F}$, the element $v$ is mapped to 
%$\ds \partial^\mbb{F}(v)=\sum_{\deg(v')\ld\beta}v'$, which under 
%$\psi_{i-1}$ is sent to  $\ds \sum_{\deg(v')\ld\beta}[\partial^\mbb{F}(v')]$.  

By the commutativity of (\ref{bigCD}), 
we have a commutative diagram,
\begin{equation}\label{mfrDiag}
\begin{CD}
(\mbb{F})_i@>\partial^\mbb{F} >> (\mbb{F})_{i-1}\\
@V\psi_i VV  @VV\psi_{i-1}V\\
(\mbb{F}(\eta))_i @>> d_i > (\mbb{F}(\eta))_{i-1}
\end{CD}
\end{equation}
for every $i\ge 2$.  Furthermore, (\ref{mfrDiag}) commutes trivially 
for $i=1$.  It follows that the sequences $\mbb{F}$ 
and $\mbb{F}(\eta)$ are isomorphic, hence 
$\mbb{F}(\eta)$ is the minimal free resolution of $R/N$.
\end{proof}
\begin{acknowledgments}
I would like to thank my thesis advisor, Alexandre Tchernev for helpful 
discussions.
\end{acknowledgments}
\mysection{Appendix: Properties of $C_\bullet(P)$, by Alexandre Tchernev}\label{AppendixTchernev}
{\small {\sc Alexandre Tchernev, Department of Mathematics and Statistics, 
University at Albany SUNY, Albany, NY 12222}}

\emph{E-mail address:} \quad {\tt tchernev@math.albany.edu}

\bigskip
We present with proofs some additional properties of the sequence $C_\bullet(P)$ 
as constructed in Section \ref{Comb}.  The same properties hold for 
the sequence $C_\bullet(P,\Gamma)$ with similar proofs which we leave 
to the reader as an exercise.  

\begin{proposition}\label{itsacomplex}
If $P$ is a ranked poset 
 then $C_\bullet(P)$ is a complex of vector spaces.
\end{proposition}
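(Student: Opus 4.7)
The plan is to exploit the rank function on $P$ by putting a three-step filtration on $\Delta_\alpha$, and then to identify the relevant pieces of $\varphi$ with maps coming from the long exact sequences of pairs and triples; since consecutive maps in such sequences compose to zero, this forces $\varphi_{i-1}\circ\varphi_i = 0$ on each summand $C_{i,\alpha}$. For every $\alpha\in P$ with $\rk(\alpha)\ge 2$, define
\[
\Delta_\alpha^{(k)} := \{\sigma\in\Delta_\alpha : \text{every vertex of }\sigma\text{ has rank}\le\rk(\alpha)-k\}, \qquad k=1,2,3.
\]
Because $P$ is ranked, these satisfy $\Delta_\alpha=\Delta_\alpha^{(1)}\supseteq\Delta_\alpha^{(2)}\supseteq\Delta_\alpha^{(3)}$, and moreover $\Delta_\alpha^{(2)}=\bigcup_{\lambda\ld\alpha}\Delta_\lambda$.

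The first step is to identify the quotient homology groups. Because covers of $\alpha$ are pairwise incomparable, each simplex of $\Delta_\alpha^{(1)}\setminus\Delta_\alpha^{(2)}$ contains exactly one cover $\lambda\ld\alpha$, so the relative chain complex splits as a direct sum indexed by $\lambda$. Since $\D_\lambda$ is the simplicial cone on $\Delta_\lambda$ with apex $\lambda$, the cone-boundary isomorphism yields
\[
\Ho_{i-2}(\Delta_\alpha^{(1)},\Delta_\alpha^{(2)}) \cong \bigoplus_{\lambda\ld\alpha}\Ho_{i-3}(\Delta_\lambda)=\bigoplus_{\lambda\ld\alpha}C_{i-1,\lambda},
\]
\[
\Ho_{i-3}(\Delta_\alpha^{(2)},\Delta_\alpha^{(3)}) \cong \bigoplus_{\substack{\gamma<\alpha\\\rk(\gamma)=\rk(\alpha)-2}}\Ho_{i-4}(\Delta_\gamma)=\bigoplus_\gamma C_{i-2,\gamma}.
\]
The ranked hypothesis is essential here, ensuring that only ranks $\rk(\alpha)-1$ and $\rk(\alpha)-2$ contribute to the respective quotients.

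The second and most delicate step is to show that, under these isomorphisms, the natural pair map
\[
\theta\colon\Ho_{i-2}(\Delta_\alpha)\longrightarrow\Ho_{i-2}(\Delta_\alpha^{(1)},\Delta_\alpha^{(2)}) \xrightarrow{\;\cong\;} \bigoplus_{\lambda\ld\alpha}C_{i-1,\lambda}
\]
agrees componentwise with $\varphi_i=\sum_\lambda\iota\circ\delta_{i-2}^{\alpha,\lambda}$, and that the connecting homomorphism $D\colon\Ho_{i-2}(\Delta_\alpha^{(1)},\Delta_\alpha^{(2)})\to\Ho_{i-3}(\Delta_\alpha^{(2)},\Delta_\alpha^{(3)})$ of the triple agrees componentwise with $\bigoplus_\lambda\varphi_{i-1}|_{C_{i-1,\lambda}}$. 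Both matches follow from a chain-level computation: for a cycle $c$ and a Mayer-Vietoris splitting $c=c'+c''$ with $c'$ supported on $\D_\lambda$ and $c''$ on $\bigcup_{\beta\ne\lambda,\beta\ld\alpha}\D_\beta$, the class $[dc']$ representing $\delta_{i-2}^{\alpha,\lambda}([c])$ in $\Ho_{i-3}(\Delta_{\alpha,\lambda})$ becomes, after inclusion into $\Ho_{i-3}(\Delta_\lambda)$, the same class as the cone-boundary of the $\lambda$-component of $c$'s image in the relative complex (the difference between the two lifts being a boundary in $\Delta_\lambda$). I expect this bookkeeping --- reconciling a sum of Mayer-Vietoris connectors with a single triple connector, with consistent sign conventions --- to be the main obstacle of the proof.

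Once these identifications are established, the triple $(\Delta_\alpha^{(1)},\Delta_\alpha^{(2)},\Delta_\alpha^{(3)})$ supplies the exact row
\[
\Ho_{i-2}(\Delta_\alpha^{(1)},\Delta_\alpha^{(3)})\xrightarrow{\;\mu\;}\Ho_{i-2}(\Delta_\alpha^{(1)},\Delta_\alpha^{(2)})\xrightarrow{\;D\;}\Ho_{i-3}(\Delta_\alpha^{(2)},\Delta_\alpha^{(3)}),
\]
so $D\circ\mu=0$. By functoriality of the pair sequence, $\theta$ factors as $\mu\circ\tilde\theta$ for the natural map $\tilde\theta\colon\Ho_{i-2}(\Delta_\alpha)\to\Ho_{i-2}(\Delta_\alpha^{(1)},\Delta_\alpha^{(3)})$. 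Hence $\varphi_{i-1}\circ\varphi_i=D\circ\theta=D\circ\mu\circ\tilde\theta=0$ on $C_{i,\alpha}$, and since $C_i=\bigoplus_\alpha C_{i,\alpha}$ this completes the proof that $C_\bullet(P)$ is a complex.
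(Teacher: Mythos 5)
Your proposal is correct and follows essentially the same route as the paper's appendix: the same rank filtration $\Delta_\alpha^{(1)}\supseteq\Delta_\alpha^{(2)}\supseteq\Delta_\alpha^{(3)}$, the same cone-boundary identification of the relative homology of consecutive stages with $\bigoplus_{\lambda\ld\alpha}C_{i-1,\lambda}$ and $\bigoplus_{\gamma}C_{i-2,\gamma}$, and the same conclusion via $D\circ\mu=0$ in the long exact sequence of the triple (the paper's Lemmas \ref{phiCD} and \ref{smallCD} carry out exactly the chain-level bookkeeping you flag as the main obstacle). The only point the paper treats separately that you elide is the degenerate case $\varphi_1\circ\varphi_2=0$ (Lemma \ref{Phi21}), where the filtration bottoms out at $\hat{0}$ and the identification is done by hand with reduced chains.
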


We begin by establishing notation for some of the relevant objects.  

Set $\Delta_{\alpha}^{(0)}=\D_{\alpha}$, and for $j\ge{1}$, write 
$$
\Delta_{\alpha}^{(j)}
=\bigcup_{\substack{\beta<\alpha \\ \rk(\beta)=\rk(\alpha)-j}}
\Delta_{\beta}^{(0)}.
$$  
Using the inclusion $\Delta_{\alpha}^{(1)}\subset\Delta_{\alpha}^{(0)}$, 
and the fact that $\Delta_{\alpha}^{(0)}$ is contractible, 
we obtain a canonical isomorphism 
\begin{equation}\label{Isom011}
\begin{CD}
\Ho_{i}(\Delta_{\alpha}^{(0)}/\Delta_{\alpha}^{(1)},\Bbbk)
@> \theta_{i,\alpha} > \cong > 
\Ho_{i-1}(\Delta_{\alpha}^{(1)},\Bbbk)\\
\end{CD}
\end{equation}
for every $i$ using the long exact sequence in relative homology.  
Further, the equality of reduced chain complexes
$$
\CC\left(\Delta_\alpha^{(j)}/\Delta_\alpha^{(j+1)}\right)=
\bigoplus_{\stackrel{\beta<\alpha}{\rk(\beta)=\rk(\alpha)-j}}\CC\left(\Delta_\beta^{(0)}/\Delta_\beta^{(1)}\right)
$$
for each $j$ gives rise to an isomorphism on the level of reduced homology,
$$
\Ho_*\left(\Delta_\alpha^{(j)}/\Delta_\alpha^{(j+1)}\right)
=
\bigoplus_{\stackrel{\beta<\alpha}{\rk(\beta)=\rk(\alpha)-j}}\Ho_*\left(\Delta_\beta^{(0)}/\Delta_\beta^{(1)}\right)
$$
which we refer to as \emph{reindexing}.

Through the inclusion of simplicial complexes 
$\Delta_{\alpha}^{(2)}\subset\Delta_{\alpha}^{(1)}$, 
we obtain an exact sequence of reduced chain complexes,
\begin{displaymath}
\begin{CD}
0
\lra
\CC(\Delta_{\alpha}^{(2)})
\lra
\CC(\Delta_{\alpha}^{(1)}) 
@>>> 
\CC(\Delta_{\alpha}^{(1)}/\Delta_{\alpha}^{(2)}) 
@>>> 0 \\
@. @| \\
@. \ds \bigoplus_{\beta\ld\alpha}
   \CC(\Delta_{\beta}^{(0)}/\Delta_{\beta}^{(1)}) @.\\
\end{CD}  
\end{displaymath}
which in turn gives rise to a long exact sequence in reduced homology,
\begin{displaymath}
\begin{CD}
\cdots\lra
\Ho_{i}(\Delta_{\alpha}^{(2)},\Bbbk)\lra\Ho_{i}(\Delta_{\alpha}^{(1)},\Bbbk) @>>> \Ho_{i}(\Delta_{\alpha}^{(1)}/\Delta_{\alpha}^{(2)},\Bbbk) @>>> \cdots \\
@. @| \\
@. \ds \bigoplus_{\beta\ld\alpha}\Ho_i(\Delta_{\beta}^{(0)}/\Delta_{\beta}^{(1)},\Bbbk) @.\\
@. @V\cong V\oplus\theta_{i,\beta}V \\
@. \ds \bigoplus_{\beta\ld\alpha}\Ho_{i-1}(\Delta_{\beta}^{(1)},\Bbbk). @.\\
\end{CD}  
\end{displaymath}
The simplicial inclusions 
$\Delta_{\alpha}^{(3)}\subset\Delta_{\alpha}^{(2)}\subset\Delta_{\alpha}^{(1)}$ 
also give rise to an exact sequence of relative chain complexes 
$$
0
\ra
\CC(\Delta_{\alpha}^{(2)}/\Delta_{\alpha}^{(3)},\Bbbk)
\ra
\CC(\Delta_{\alpha}^{(1)}/\Delta_{\alpha}^{(3)},\Bbbk) 
\ra
\CC(\Delta_{\alpha}^{(1)}/\Delta_{\alpha}^{(2)},\Bbbk) 
\ra
0 
$$
which produces a long exact sequence in reduced homology
\begin{equation}\label{Quotients}
\cdots
\ra
\Ho_i(\Delta_{\alpha}^{(1)}/\Delta_{\alpha}^{(3)},\Bbbk)
\stackrel{\mu}{\ra}
\Ho_i(\Delta_{\alpha}^{(1)}/\Delta_{\alpha}^{(2)},\Bbbk)
\stackrel{D}{\ra}
\Ho_{i-1}(\Delta_{\alpha}^{(2)}/\Delta_{\alpha}^{(3)},\Bbbk)
\ra
\cdots.
\end{equation}

Finally, we have the equality  
$$
\ds
\Ho_{i-1}(\Delta_{\alpha}^{(2)}/\Delta_{\alpha}^{(3)},\Bbbk)=
\bigoplus_{\stackrel{\gamma<\alpha}{\rk(\gamma)=\rk(\alpha)-2}}
  \Ho_{i-1}(\Delta_{\gamma}^{(0)}/\Delta_{\gamma}^{(1)},\Bbbk),
$$ 
which is given by reindexing.

We can now proceed with the proof of Proposition \ref{itsacomplex}, 
which we break into three lemmas.  

\begin{lemma}\label{Phi21}
We have $\varphi_1\circ\varphi_2=0$.
\end{lemma}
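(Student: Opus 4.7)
The plan is to do a case analysis on $\rk(\alpha)$ for each summand $C_{2,\alpha}$ of $C_2$. First I would dispose of two trivial cases: if $\rk(\alpha)=1$ (so $\alpha\in S$) then $\Delta_\alpha=\D_{\hat{0}}=\emptyset$ and $C_{2,\alpha}=\Ho_0(\emptyset,\Bbbk)=0$; and if $\rk(\alpha)\geq 3$, rankedness forces every cover $\lambda\ld\alpha$ to satisfy $\rk(\lambda)\geq 2$, so $\lambda\notin S$ and $C_{1,\lambda}=0$ by the earlier remark, making every component $\varphi_2^{\alpha,\lambda}$ zero.

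The only substantive case is $\rk(\alpha)=2$, where every cover $\lambda\ld\alpha$ is an atom, $\D_\lambda=\{\lambda\}$, and $\Delta_\alpha=\bigcup_{\lambda\ld\alpha}\{\lambda\}$ is a finite discrete set. Distinct atom covers are incomparable in a ranked poset, so $\Delta_{\alpha,\lambda}=\{\lambda\}\cap\bigl(\bigcup_{\mu\ne\lambda,\mu\ld\alpha}\{\mu\}\bigr)=\emptyset$ for each $\lambda\ld\alpha$, giving $\Ho_{-1}(\Delta_{\alpha,\lambda},\Bbbk)\cong\Bbbk$. I would then unpack the Mayer-Vietoris connecting map: representing a class in $C_{2,\alpha}=\Ho_0(\Delta_\alpha,\Bbbk)$ by a chain $c=\sum_{\lambda\ld\alpha}r_\lambda\cdot\lambda$ subject to the reduced cycle condition $\sum_\lambda r_\lambda=0$, the decomposition $c=c'+c''$ is forced to put $r_\lambda\cdot\lambda$ into $c'$ (since $\lambda$ belongs to no other $\D_\mu$), so $\delta_0^{\alpha,\lambda}([c])=[d_0(r_\lambda\cdot\lambda)]$ is identified with the scalar $r_\lambda\in\Bbbk$.

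After composing with $\iota$ and summing, $\varphi_2|_{C_{2,\alpha}}$ is precisely the natural inclusion $\{(r_\lambda):\sum_\lambda r_\lambda=0\}\hookrightarrow\bigoplus_{\lambda\ld\alpha}\Bbbk$, while $\varphi_1$ on this image is the sum map $\bigoplus_\lambda\Bbbk\to\Bbbk$. Their composition is zero by definition of the kernel, proving $\varphi_1\circ\varphi_2|_{C_{2,\alpha}}=0$. The only real obstacle is the bookkeeping around the Mayer-Vietoris connecting homomorphism, together with recognizing that rankedness is doing genuine work here: it both forces non-atom covers to contribute nothing to $C_1$ and forces $\Delta_{\alpha,\lambda}=\emptyset$, so that $\delta_0^{\alpha,\lambda}$ records $r_\lambda$ faithfully. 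Without rankedness, an atom cover $\lambda$ could be absorbed into a neighboring $\D_\mu$, producing $\Delta_{\alpha,\lambda}=\{\lambda\}$, which would kill that component of $\varphi_2^{\alpha,\lambda}$ and break the kernel-sum identification on which the vanishing rests.
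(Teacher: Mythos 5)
Your proof is correct, and its substantive core is the same as the paper's: a reduced $0$-cycle satisfies $\sum_\lambda c_\lambda=0$, the Mayer--Vietoris connecting maps extract the partial sums attached to the covers of $\alpha$, and $\varphi_1$ re-adds them to recover the total sum, which vanishes. The only organizational difference is that you split into cases on $\rk(\alpha)$, whereas the paper argues uniformly by partitioning $(\hat{0},\alpha)$ into blocks $P_\beta$ indexed by the covers $\beta\ld\alpha$; your isolation of the rank-$2$ case as the only substantive one is a clean equivalent. One correction to your closing commentary, though: your diagnosis of where rankedness enters is off. Two distinct covers $\lambda,\mu\ld\alpha$ are \emph{always} incomparable (if $\lambda<\mu<\alpha$ then $\lambda$ would not cover $\alpha$), so an atom cover can never be ``absorbed'' into a neighboring $\D_\mu$ and $\Delta_{\alpha,\lambda}=\emptyset$ holds for atom covers regardless of rankedness. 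What rankedness actually buys is that atom covers and non-atom covers of a fixed $\alpha$ cannot coexist: if $\alpha$ had both an atom cover $a$ and a cover $\beta$ of rank $\ge 2$, the coefficient mass routed to $\beta$ would die in $C_{1,\beta}=0$ while the mass at $a$ survives, and $\varphi_1\circ\varphi_2$ would pick up the unbalanced partial sum $c_a$ rather than the full sum $\sum_\lambda c_\lambda$ --- this is exactly how the lemma fails for non-ranked posets.
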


\begin{proof} 
Suppose $[w]\in \Ho_0(\Delta_{\alpha}^{(1)},\Bbbk)$, 
with representative cycle $w$.
We therefore have
$$
w=\sum_{\lambda\in\left(\hat{0},\alpha\right)} c_\lambda\cdot\{\lambda\}
$$  
with $\ds \sum_{\lambda\in\left(\hat{0},\alpha\right)} c_\lambda=0$.
Choosing a partition of $\left(\hat{0},\alpha\right)$ into a disjoint 
union 
\begin{equation}\label{partition}
\left(\hat{0},\alpha\right)=\bigsqcup_{\beta\ld\alpha}P_\beta
\end{equation}
of subsets $P_\beta$ such that for every $\lambda\in P_\beta$
one has $\lambda\le\beta$, we get
$$
w=\sum_{\beta\ld\alpha} w_\beta
$$
with
$$
w_\beta
=\sum_{\lambda\in P_\beta} c_\lambda\cdot\{\lambda\}.
$$
Therefore, 
$$
\varphi_2^{\alpha,\beta}([w])
=[d(w_\beta)]
=\left[\sum_{\lambda\in P_\beta} c_\lambda\cdot\emptyset\right]
$$
where $d$ is the usual boundary map.

Applying $\varphi_1$, we now have 
\begin{eqnarray*}
\varphi_1\circ\varphi_2([w]) & 
= & \sum_{\beta\ld\alpha}\varphi_1\circ\varphi_2^{\alpha,\beta}([w])\\
& = & \sum_{\beta\ld\alpha}
 \varphi_1\left(\left[\sum_{\lambda\in P_\beta} c_\lambda\cdot\emptyset\right]\right)\\
& = & \sum_{\beta\ld\alpha}
 \left[\sum_{\lambda\in P_\beta} c_\lambda\cdot\emptyset\right]\\
& = & \sum_{\beta\ld\alpha}
 \sum_{\lambda\in P_\beta} c_\lambda\cdot\emptyset\\
& = & \sum_{\lambda\in \left(\hat{0},\alpha\right)} c_\lambda\cdot\emptyset\\
& = & 0.
\end{eqnarray*}
\end{proof}

\begin{lemma}\label{phiCD}
For each $i\ge 1$ the diagram  
\begin{equation*}
\xymatrix{ 
\Ho_{i}(\Delta_{\alpha}^{(1)}/\Delta_{\alpha}^{(2)},\Bbbk) \ar[r]^D \ar@{=}[d]
& \Ho_{i-1}(\Delta_{\alpha}^{(2)}/\Delta_{\alpha}^{(3)},\Bbbk) \ar@{=}[d]\\
\ds \bigoplus_{\beta\ld\alpha}\Ho_{i}\left(\Delta_{\beta}^{(0)}/\Delta_{\beta}^{(1)},\Bbbk\right)
    \ar[d]^\cong_{\oplus\theta_{i,\beta}}
& \ds \bigoplus_{\stackrel{\gamma<\alpha}{\rk(\gamma)=\rk(\alpha)-2}}
\Ho_{i-1}(\Delta_{\gamma}^{(0)}/\Delta_{\gamma}^{(1)},\Bbbk) \ar[d]_\cong^{\oplus\theta_{i-1,\gamma}} \\
 \ds \bigoplus_{\beta\ld\alpha}\Ho_{i-1}(\Delta_{\beta}^{(1)},\Bbbk) \ar[r]^{\varphi_{i+1}}
& \ds \bigoplus_{\stackrel{\gamma<\alpha}{\rk(\gamma)=\rk(\alpha)-2}}\Ho_{i-2}(\Delta_{\gamma}^{(1)},\Bbbk)}
\end{equation*}
is commutative. 
\end{lemma}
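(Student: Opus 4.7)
The plan is to chase a cycle through both paths at the chain level, exploiting the fact that because $P$ is ranked, each reindexing isomorphism comes from a direct-sum decomposition of reduced chain complexes
\[
\CC\bigl(\Delta_\alpha^{(j)}/\Delta_\alpha^{(j+1)}\bigr)
\;=\;
\bigoplus_{\substack{\beta<\alpha\\\rk(\beta)=\rk(\alpha)-j}}
\CC\bigl(\D_\beta/\Delta_\beta^{(1)}\bigr)
\]
that groups simplices according to their top vertex. First I would record that because $\D_\beta$ is contractible, the isomorphism $\theta_{k,\beta}$ is the connecting map of the pair $(\D_\beta,\Delta_\beta^{(1)})$ and so sends a cycle $w_\beta\in\CC_k(\D_\beta/\Delta_\beta^{(1)})$ to the class $[dw_\beta]\in\Ho_{k-1}(\Delta_\beta^{(1)})$, where the boundary has top vertex strictly below $\beta$ since the top-$\beta$ facets of simplices in $w_\beta$ cancel.

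Next, I would take a cycle $w\in\CC_i(\Delta_\alpha^{(1)}/\Delta_\alpha^{(2)})$ and decompose $w=\sum_{\beta\ld\alpha}w_\beta$ via reindexing; the cycle condition forces each $w_\beta$ to be a cycle in $\D_\beta/\Delta_\beta^{(1)}$. The down-then-right path produces in the $\gamma$-summand of $\bigoplus_\gamma\Ho_{i-2}(\Delta_\gamma^{(1)})$ the class $\sum_{\beta\,\colon\,\gamma\ld\beta\ld\alpha}[dc'_\beta]$, where $c'_\beta$ is the $\D_\gamma$-component in any Mayer-Vietoris splitting $dw_\beta=c'_\beta+c''_\beta$ with $c''_\beta$ supported on $\bigcup_{\mu\ne\gamma,\ \mu\ld\beta}\D_\mu$. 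The right-then-down path produces $\sum_\beta[dw_{\beta,\gamma}]$, where $w_{\beta,\gamma}$ collects those simplices of $dw_\beta$ whose top vertex is exactly $\gamma$.

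To identify the two expressions I would choose $c'_\beta$ to consist of exactly those simplices of $dw_\beta$ whose top vertex is $\le\gamma$. Then $c'_\beta=w_{\beta,\gamma}+e_\beta$, where $e_\beta$ collects the simplices with top vertex strictly below $\gamma$; rankedness of $P$ places any such simplex in some $\D_\lambda$ with $\lambda\ld\gamma$, so $e_\beta\in\CC(\Delta_\gamma^{(1)})$ and $[de_\beta]=0$ in $\Ho_{i-2}(\Delta_\gamma^{(1)})$. Consequently $[dc'_\beta]=[dw_{\beta,\gamma}]$ term by term, and summing over $\beta$ matches the two paths in each $\gamma$-summand.

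The main obstacle is justifying that this particular splitting is legitimate, i.e.\ that the complement $c''_\beta=dw_\beta-c'_\beta$ really is supported on $\bigcup_{\mu\ne\gamma,\ \mu\ld\beta}\D_\mu$. A simplex in $c''_\beta$ has top vertex $\lambda<\beta$ with $\lambda\not\le\gamma$, and rankedness supplies some $\mu\ld\beta$ with $\lambda\le\mu$; then $\mu\ne\gamma$, for otherwise $\lambda\le\gamma$. This verification is the one place where the hypothesis that $P$ is ranked is used in an essential way; once it is in hand, tracking signs through the standard simplicial boundary conventions is routine and the commutativity follows.
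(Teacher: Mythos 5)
Your argument is correct and follows essentially the same route as the paper's proof: decompose the relative cycle by top vertex, use that each $\theta$ is realized at the chain level by the boundary map, and compare the Mayer--Vietoris components landing in each $\Ho_{i-2}(\Delta_\gamma^{(1)},\Bbbk)$. The only cosmetic difference is that the paper fixes in advance a partition of $(\hat{0},\beta)$ into blocks $P_{\beta,\gamma}$ so that the two composites agree on the nose at the level of chain representatives, whereas you take the splitting by ``top vertex $\le\gamma$'' and absorb the discrepancy into the term $d(e_\beta)$, which is a boundary of a chain of $\Delta_\gamma^{(1)}$ and hence vanishes in homology.
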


\begin{proof}
To verify commutativity, it suffices to show 
that for each $\delta<\alpha$ with $\rk(\delta)=\rk(\alpha)-2$ the 
components of 
$\ds\varphi_i\circ\left(\oplus_{\beta\ld\alpha}\theta_{i,\beta}\right)$ 
and $\ds\left(\oplus_{\beta\ld\alpha}\theta_{i-1,\gamma}\right)\circ D$ 
are the same in $\Ho_{i-2}(\Delta_{\delta}^{(1)},\Bbbk)$.  

Indeed, suppose that $[\bar{w}]$ is a representative 
for the homology class generated by the image $\bar{w}$ in
$\widetilde{C}_i\left(\Delta_\alpha^{(1)}/\Delta_\alpha^{(2)}\right)$
of the relative cycle
$$
w
=\sum_{a_i^\sigma\ld\alpha} c_\sigma\cdot\sigma
=\sum_{\beta\ld\alpha} w_\beta
$$ 
of $\left(\Delta_\alpha^{(1)},\Delta_\alpha^{(2)}\right)$,
where $c_\sigma\in\Bbbk$, each face   
$\sigma=\{a^\sigma_0,\ldots,a^\sigma_i\}$ is oriented by 
$a^\sigma_0<\cdots<a^\sigma_i$ 
and $\ds w_\beta=\sum_{a_i^\sigma=\beta}c_\sigma\cdot\sigma$. 
Since $w$ is a relative cycle, we must have 
$$
\sum_{a_i^\sigma=\beta} c_\sigma\cdot d(\hat{\sigma})=0
$$ 
where $\hat{\sigma}=\{a^\sigma_0,\ldots,a^\sigma_{i-1}\}$
and $d$ is the usual boundary map.
Therefore, each $w_\beta$ is a relative cycle for 
$(\Delta_\beta^{(0)},\Delta_\beta^{(1)})$
and $[\bar{w}]$ has 
$$
[\bar{w}]
=\sum_{\beta\ld\alpha}[\bar{w}_\beta]
$$
as its reindexing decomposition.  Thus,
$$
\left(\oplus\theta_{i,\beta}\right)[\bar{w}]
=\sum_{\beta\ld\alpha}\theta_{i,\beta}([\bar{w}])
=\sum_{\beta\ld\alpha}[d(w_\beta)]
=\sum_{\beta\ld\alpha}[v_\beta]
$$
where 
$$
v_\beta
=d(w_\beta)
=(-1)^i\sum_{a_i^\sigma=\beta}c_\sigma\cdot\hat{\sigma}
$$
is a cycle in $\Delta_\beta^{(0)}$.

Next, choose for each 
$\beta\ld\alpha$ a partition 
$$
\left(\hat{0},\beta\right)
=\bigsqcup_{\gamma\ld\beta}P_{\beta,\gamma}
$$
of $\left(\hat{0},\beta\right)$ such that 
$\lambda\le\gamma$ for each 
$\lambda\in P_{\beta,\gamma}$ and write 
$$
v_\beta
=\sum_{\gamma\ld\beta}w_{\beta,\gamma}
$$
where 
$$
w_{\beta,\gamma}
=(-1)^i\sum_{\stackrel{a_i^\sigma=\beta}{a_{i-1}^\sigma\in P_{\beta,\gamma}}}
 c_\sigma\cdot\hat{\sigma}.
$$
It follows that the component of 
$\varphi_{i+1}\left[\left(\oplus\theta_{i,\beta}\right)([\bar{w}])\right]$
in $\Ho_{i-2}(\Delta_{\delta}^{(1)},\Bbbk)$ is given by 
$$
\sum_{\beta:\delta<\beta<\alpha}
   \varphi_{i+1}^{\beta,\delta}\left([v_\beta]\right)
=\sum_{\beta:\delta\ld\beta\ld\alpha}
   \varphi_{i+1}^{\beta,\delta}\left(\left[\sum_{\gamma\ld\beta}w_{\beta,\gamma}\right]\right)
=\sum_{\beta:\delta\ld\beta\ld\alpha}
   \left[d(w_{\beta,\delta})\right].  
$$
On the other hand, since $v_\beta$ is a cycle, 
each $w_{\beta,\gamma}$ is a relative cycle for 
$(\Delta_\alpha^{(2)},\Delta_\alpha^{(3)})$.
As $D$ is the connecting map in (\ref{phiCD}), we have
\begin{eqnarray*}
D([\bar{w}]) & = & \left[\overline{d(w)}\right]\\
& = & \left[\overline{\sum_{\beta\ld\alpha}d(w_\beta)}\right]\\
& = & \left[\sum_{\beta\ld\alpha}\bar{v}_\beta\right]\\
& = & \left[\sum_{\beta\ld\alpha}\left(\sum_{\gamma\ld\beta}w_{\beta,\gamma}\right)\right]\\
& = & \sum_{\stackrel{\gamma<\alpha}{\rk(\gamma)=\rk(\alpha)-2}}
      \left(\sum_{\gamma\ld\beta\ld\alpha}[\bar{w}_{\beta,\gamma}]\right)\\
& = & \sum_{\stackrel{\gamma<\alpha}{\rk(\gamma)=\rk(\alpha)-2}}
      \left[\sum_{\gamma\ld\beta\ld\alpha}\bar{w}_{\beta,\gamma}\right]\\
\end{eqnarray*}
as its reindexing decomposition.
Therefore, the component  of 
$(\oplus_\gamma\theta_{i-1,\gamma})\circ(D([\bar{w}]))$
in $\Ho_{i-2}(\Delta_{\delta}^{(1)},\Bbbk)$ is equal to 
$$
\theta_{i-1,\delta}
 \left(
 \sum_{\beta:\delta\ld\beta\ld\alpha}\bar{w}_{\beta,\delta}
 \right)
=\left[d\left(\sum_{\beta:\delta\ld\beta\ld\alpha}w_{\beta,\delta}\right)\right] 
=\sum_{\beta:\delta\ld\beta\ld\alpha}[d(w_{\beta,\delta})],
$$
which proves commutativity. 
\end{proof} 

\begin{lemma}\label{smallCD}
For each $i\ge 1$ the diagram 
\begin{equation}
\xymatrix{ 
\Ho_{i}(\Delta_{\alpha}^{(1)}/\Delta_{\alpha}^{(3)},\Bbbk) \ar[r]^\mu
& \Ho_{i}(\Delta_{\alpha}^{(1)}/\Delta_{\alpha}^{(2)},\Bbbk) \ar@{=}[d]\\
& \ds \bigoplus_{\beta\ld\alpha}\Ho_{i}\left(\Delta_{\beta}^{(0)}/\Delta_{\beta}^{(1)},\Bbbk\right)
    \ar[d]_\cong^{\oplus\theta_{i,\beta}}\\
\Ho_{i}(\Delta_{\alpha}^{(1)},\Bbbk)\ar[r]^{\varphi_{i+2}} \ar[uu]^\pi
& \ds \bigoplus_{\beta\ld\alpha}\Ho_{i-1}(\Delta_{\beta}^{(1)},\Bbbk),}
\end{equation}
is commutative.  
\end{lemma}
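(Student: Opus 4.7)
The plan is to verify commutativity by chasing an arbitrary representative cycle through both paths of the diagram and checking the results agree on the nose. The key observation is that the same chain-level decomposition $c=\sum_{\beta\lessdot\alpha} c_\beta$ arising from a partition of the open interval $(\hat{0},\alpha)$ simultaneously serves as (a) the Mayer--Vietoris splitting used to compute $\varphi_{i+2}$, and (b) the reindexing splitting that realizes $\mu(\pi([c]))$ as an element of $\bigoplus_{\beta\lessdot\alpha}\widetilde{H}_i(\Delta_\beta^{(0)}/\Delta_\beta^{(1)})$. Thus commutativity reduces to identifying two formulas for the same simplicial cycle.

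More concretely, I would fix $[c]\in\widetilde{H}_i(\Delta_\alpha^{(1)},\Bbbk)$ with representative cycle $c$, and choose (as in the proof of Lemma \ref{Phi21}) a partition $(\hat{0},\alpha)=\bigsqcup_{\beta\lessdot\alpha}P_\beta$ with $\lambda\le\beta$ for all $\lambda\in P_\beta$. Write $c=\sum_{\beta\lessdot\alpha}c_\beta$, where $c_\beta$ collects those simplices $\sigma=\{a_0^\sigma<\cdots<a_i^\sigma\}$ with top vertex $a_i^\sigma\in P_\beta$. Then $c_\beta\in\widetilde{\mathcal{C}}_i(\textup{\textbf{D}}_\beta)$. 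For the bottom route, evaluating $\varphi_{i+2}^{\alpha,\beta}=\iota\circ\delta_i^{\alpha,\beta}$ on $[c]$ via the Mayer--Vietoris triple $(\textup{\textbf{D}}_\beta,\bigcup_{\mu\ne\beta}\textup{\textbf{D}}_\mu,\Delta_\alpha^{(1)})$ with the splitting $c=c_\beta+\sum_{\mu\ne\beta}c_\mu$ yields $\varphi_{i+2}^{\alpha,\beta}([c])=[d(c_\beta)]$ in $\widetilde{H}_{i-1}(\Delta_\beta^{(1)})$. For the top route, $\pi([c])$ lands in $\widetilde{H}_i(\Delta_\alpha^{(1)}/\Delta_\alpha^{(3)})$; applying $\mu$ produces the class of $\overline{c}$ in $\widetilde{H}_i(\Delta_\alpha^{(1)}/\Delta_\alpha^{(2)})$; and the reindexing decomposes this as $\sum_\beta[\overline{c}_\beta]$. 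The isomorphism $\theta_{i,\beta}$, being the connecting map of the long exact sequence of the pair $(\Delta_\beta^{(0)},\Delta_\beta^{(1)})$, sends $[\overline{c}_\beta]$ to $[d(c_\beta)]$. The two outputs match component by component.

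The main obstacle, as in Lemma \ref{phiCD}, will be bookkeeping: I must check that each $c_\beta$ is indeed a cycle modulo $\Delta_\beta^{(1)}$ (this uses that $c$ itself is a cycle in $\Delta_\alpha^{(1)}$, since boundary pieces with top vertex strictly below some $\beta$ must cancel across blocks of the partition), and that the connecting homomorphism $\theta_{i,\beta}$ agrees with the Mayer--Vietoris connecting homomorphism $\delta_i^{\alpha,\beta}$ on the appropriate summand. Both follow from writing down the snake/zigzag at the chain level, matching $d(c_\beta)$ in each case. A secondary issue is confirming independence of the chosen partition: any two partitions produce splittings $c=\sum c_\beta$ that differ by chains supported on $\Delta_\alpha^{(2)}$, which is precisely what is quotiented out at both the relative and the Mayer--Vietoris levels, so the homology classes $[d(c_\beta)]\in\widetilde{H}_{i-1}(\Delta_\beta^{(1)})$ are unchanged. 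With these verifications in hand, the diagram commutes.
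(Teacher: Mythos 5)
Your proposal is correct and follows essentially the same route as the paper's proof: both decompose a representative cycle via a partition $(\hat{0},\alpha)=\bigsqcup_{\beta\ld\alpha}P_\beta$ according to top vertices, and identify the $\beta$-component of each composite as $[d(c_\beta)]\in\Ho_{i-1}(\Delta_{\beta}^{(1)},\Bbbk)$. The bookkeeping points you flag (that each $c_\beta$ is a relative cycle for $(\Delta_\beta^{(0)},\Delta_\beta^{(1)})$, and that $\theta_{i,\beta}$ and the Mayer--Vietoris connecting map agree on the nose at the chain level) are exactly the verifications the paper carries out.
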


\begin{proof}
It is enough to show that $\varphi_{i+2}$ and 
$(\oplus\theta_{i,\beta})\circ D\circ\mu\circ\pi$
have the same component in $\Ho_{i-1}(\Delta_{\beta}^{(1)},\Bbbk)$.

Suppose that $[v]$ is a 
homology class in $\Ho_{i}(\Delta_{\alpha}^{(1)},\Bbbk)$ 
represented the by the cycle 
$$
v=\ds\sum_{a_i^\sigma<\alpha}c_\sigma\cdot\sigma.
$$
Under projection 
$$
\pi([v])
=[\pi(v)]
=\left[\ds\sum_{a_i^\sigma<\alpha}c_\sigma\cdot\sigma'\right],
$$
where $\sigma'=\pi(\sigma)$ is the image of 
$\sigma$ under the standard projection 
$\CC(\Delta_\alpha^{(1)})\lra\CC(\Delta_\alpha^{(1)}/\Delta_\alpha^{(3)})$.  
Applying $\mu$ to $\pi([v])$, we have 
$$
\mu([\pi(v)])
=\left[\ds\sum_{a_i^\sigma<\alpha}c_\sigma\cdot\bar{\sigma}\right]
=\ds\sum_{\beta\ld\alpha}[\bar{w}_\beta],
$$
where $\bar{\sigma}$ is the image of $\sigma$ 
under the projection 
$\CC(\Delta_\alpha^{(1)})\lra\CC(\Delta_\alpha^{(1)}/\Delta_\alpha^{(2)})$,   
the process of reindexing has produced   
$$
w_\beta
=\ds\sum_{a_i^\sigma\in P_\beta}c_\sigma\cdot\sigma,
$$
and $P_\beta$ is as in (\ref{partition}).
Thus 
$$
\left(\oplus\theta_{i,\beta}\right)\left(\ds\sum_{\beta\ld\alpha}[\bar{w}_\beta]\right)
=\sum_{\beta\ld\alpha}\theta_{i,\beta}([\bar{w}_\beta])
=\sum_{\beta\ld\alpha}[d(w_\beta)]
=\sum_{\beta\ld\alpha}[v_\beta]
$$
where 
$$
v_\beta
=d(w_\beta)
=(-1)^i\left(\sum_{a_i^\sigma=\beta}c_\sigma\cdot\hat{\sigma}\right)
 +d\left(\sum_{a_i^\sigma\in P_\beta\smsm\{\beta\}}c_\sigma\cdot\sigma\right)
$$
and  
$\hat{\sigma}=\sigma\smsm\{a_i^\sigma\}$.  
Clearly, $[v_\beta]$, is the component 
of $(\oplus\theta_{i,\beta})\circ D\circ\mu\circ\pi$ in 
$\Ho_{i-1}(\Delta_{\beta}^{(1)},\Bbbk)$.  

Next, given $\beta\ld\alpha$ we have 
$$
v
=\ds\sum_{a_i^\sigma<\alpha}c_\sigma\cdot\sigma
=\ds\sum_{\beta\ld\alpha}\left(\sum_{a_i^\sigma\in P_\beta}c_\sigma\cdot\sigma\right)
$$
and therefore, 
\begin{eqnarray*}
\varphi_{i+2}([v]) & = & \ds\sum_{\beta\ld\alpha}\varphi_{i+2}^{\alpha,\beta}([v])\\
& = & \ds\sum_{\beta\ld\alpha}\left[d\left(\ds\sum_{a_i^\sigma\in P_\beta}c_\sigma\cdot\sigma\right)\right]\\
& = & \ds\sum_{\beta\ld\alpha}\left[(-1)^i\left(\sum_{a_i^\sigma=\beta}c_\sigma\cdot\hat{\sigma}\right)
 +d\left(\sum_{a_i^\sigma\in P_\beta\smsm\{\beta\}}c_\sigma\cdot\sigma\right)\right]\\
& = & \sum_{\beta\ld\alpha}[v_\beta]
\end{eqnarray*}
so that the commutativity has been proved.  
\end{proof}

\begin{proof}[Proof of Proposition \ref{itsacomplex}]
We show $\varphi_{i-1}\circ\varphi_i=0$.  Lemma \ref{Phi21} 
establishes this for $i=2$, and 
the commutative diagrams of Lemmas \ref{phiCD} and \ref{smallCD} may 
be combined so that for $i\ge 3$ we have the equality  
$\varphi_{i-1}\circ\varphi_i=(\oplus\theta_{i-1,\gamma})\circ D\circ\mu\circ\pi$.  
Since $D$ and $\mu$ are consecutive maps 
in an exact sequence, $D\circ\mu=0$.  
Therefore $\varphi_{i-1}\circ\varphi_i=0$ for each $i$ 
which completes the proof that $C_\bullet(P)$ is a complex.
\end{proof}

The general case can always be reduced to the case that 
$P$ is ranked because of the following.  

\begin{lemma}\label{rankcanon}
There is a rank preserving canonical embedding
$P\subset P'$, where $P'$ is a ranked poset, so that 
for every $i\ge{0}$ and $\alpha\in P'$ one has 
$$
C_{i,\alpha}(P')
=\left\{
\begin{array}{ll}
C_{i,\alpha}(P) & \textup{if}\quad \alpha\in P,\\
0 & \textup{otherwise.}
\end{array}
\right.
$$
In particular, $C_{i}(P)=C_{i}(P')$ for every $i\ge{0}$.
\end{lemma}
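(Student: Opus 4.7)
The plan is to construct $P'$ by subdividing every cover relation of $P$ whose rank jump exceeds one, and then to verify the $C$-theoretic assertions by direct inspection for inserted elements and by a homotopy-preservation argument for elements of $P$.

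First, define $P'$ as follows. For each cover $\beta \ld_P \alpha$ in $P$ with $k_{\beta,\alpha} = \rk_P(\alpha) - \rk_P(\beta) > 1$, adjoin fresh elements $\gamma_1^{(\beta,\alpha)}, \ldots, \gamma_{k_{\beta,\alpha}-1}^{(\beta,\alpha)}$ and impose $\beta < \gamma_1^{(\beta,\alpha)} < \cdots < \gamma_{k_{\beta,\alpha}-1}^{(\beta,\alpha)} < \alpha$, taking the transitive closure with $\le_P$ to define $\le_{P'}$. A direct check shows that $P \hookrightarrow P'$ is rank-preserving, every cover in $P'$ has rank jump exactly $1$, and $P'$ is ranked.

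For an inserted $\gamma = \gamma_j^{(\beta,\alpha)}$, the open interval $(\hat{0}, \gamma)_{P'}$ is the union $(\hat{0}, \beta]_P \cup \{\gamma_1^{(\beta,\alpha)}, \ldots, \gamma_{j-1}^{(\beta,\alpha)}\}$, which has the maximum element $\gamma_{j-1}^{(\beta,\alpha)}$ if $j > 1$, and $\beta$ if $j = 1$; note $\beta \ne \hat{0}$ because a cover $\hat{0} \ld_P \alpha$ automatically forces $\rk_P(\alpha) = 1$ and so disqualifies insertion. Hence $\Delta_\gamma(P')$ is a nonempty simplicial cone, contractible, and $C_{i,\gamma}(P') = \Ho_{i-2}(\Delta_\gamma(P'), \Bbbk) = 0$ for every $i \ge 1$.

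For $\alpha \in P$ with $\alpha \ne \hat{0}$, I would establish that $\Delta_\alpha(P) \hookrightarrow \Delta_\alpha(P')$ is a homotopy equivalence by introducing the inserted elements one at a time. Process each subdivided cover $\beta \ld_P \alpha$ by inserting $\gamma_1^{(\beta,\alpha)}, \gamma_2^{(\beta,\alpha)}, \ldots$ in order, so that at each step $\gamma_j$ sits between $\mu \ld_Q \nu$ in the current intermediate poset $Q$, where $\mu = \gamma_{j-1}^{(\beta,\alpha)}$ (or $\beta$ when $j = 1$) and $\nu = \alpha$. For any $\alpha_0 \in P$ with $\alpha \le \alpha_0$, the link of $\gamma_j$ in the order complex of $(\hat{0}, \alpha_0)_{Q \cup \{\gamma_j\}}$ is the simplicial join $\Delta((\hat{0}, \mu]_Q) \ast \Delta([\nu, \alpha_0)_Q)$ (with the second factor absent when $\nu = \alpha_0$). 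Each factor is a cone with apex $\mu$ or $\nu$ and hence contractible; the join of contractibles is contractible. Attaching the star of $\gamma_j$ (itself a cone) along this contractible link preserves homotopy type. Iterating, the inclusion $\Delta_\alpha(P) \hookrightarrow \Delta_\alpha(P')$ is a homotopy equivalence, which gives the canonical identification $C_{i,\alpha}(P) = \Ho_{i-2}(\Delta_\alpha(P), \Bbbk) = \Ho_{i-2}(\Delta_\alpha(P'), \Bbbk) = C_{i,\alpha}(P')$. Summing over $P'$ and using vanishing at inserted $\gamma$'s yields $C_i(P) = C_i(P')$.

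The main obstacle is the homotopy step: one must arrange the insertion order so that at every stage the new vertex's link cleanly splits into a join of two cones, and verify this decomposition accounts correctly for any previously inserted $\gamma$'s already present in $Q$. The structural input that makes the argument succeed is that each $\gamma_j^{(\beta,\alpha)}$ is comparable in $P'$ only with its own sibling chain and with elements of $P$ that are $\le \beta$ or $\ge \alpha$, confining its link to order complexes of intervals $(\hat{0}, \mu]_Q$ and $[\nu, \alpha_0)_Q$ that always possess an extremal element.
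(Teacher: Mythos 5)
Your proposal is correct and follows essentially the same route as the paper: subdivide each long cover one inserted element at a time, note that each inserted element's $\Delta_\gamma$ is a nonempty cone (hence $C_{i,\gamma}(P')=0$), and recover the homology of $\Delta_{\alpha_0}$ for $\alpha_0\in P$ by gluing on the contractible closed star of the new vertex along its contractible link --- which is precisely the paper's Mayer--Vietoris argument for the triple $(\Delta_\delta,\Omega,\Delta'_\delta)$. Your explicit observations that no insertion occurs below an atom (so $\Delta_\gamma\ne\emptyset$ and the $i=1$ case is safe) and that the link decomposes as the join $\Delta((\hat{0},\mu])\ast\Delta([\nu,\alpha_0))$ of two cones are welcome refinements of details the paper leaves implicit.
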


\begin{proof}
For $\beta\ld\alpha$ with $\rk_P(\alpha)-\rk_P(\beta)\ge 2$, let  
$$
P_{\alpha,\beta}=
\{\gamma_i^{\alpha,\beta}:1\le i\le \rk_P(\alpha)-\rk_P(\beta)-1\}
$$
be a set of symbols, and define $P'$ as the disjoint union 
$$
P'=P\bigsqcup\left(\bigsqcup_{\beta\ld\alpha\in P}P_{\alpha,\beta}\right).
$$
We order the elements of $P'$ by describing the covering relations:
the Hasse diagram of $P'$ is obtained from the Hasse diagram of $P$ 
by breaking up an edge $\beta\ld\alpha$ into $n$ edges $\beta\ld\gamma_1^{\alpha,\beta}\ld\cdots\ld\gamma_{n-1}^{\alpha,\beta}\ld\alpha$
where $n=\rk_P(\alpha)-\rk_P(\beta)$.  It is clear that in 
this way, $P'$ is canonically determined by $P$ and 
$\rk_P(\alpha)=\rk_{P'}(\alpha)$ for each $\alpha\in P$.  
For the remainder of the proof, we transfer this notation to any 
set (poset, simplicial complex) $X$ associated to $P$ and 
write $X'$ for the corresponding set (poset, simplicial complex).  

Since $P'$ can be obtained iteratively by breaking up one edge in 
two at a time, to prove the second claim of the Lemma it is enough 
to assume that only one additional poset element $\gamma$ is added 
to $P$ and $\beta\ld\gamma\ld\alpha$ in $P'$ for some 
$\beta\ld\alpha$ in $P$.  

%Tchernev deleted June 2nd.
\begin{comment}
The uniqueness of covering implies 
that $\cplx{\gamma}'=\sx{\beta}$ and hence 
$\Ho_i(\cplx{\gamma}',\Bbbk)=0$ for every $i$.  Further, 
there is an equality $\sx{\gamma}=\sx{\beta}$.  Indeed, taking 
$a\in{S}$ so that $a<\beta$, then $a<\gamma$ as $a<\beta\ld\gamma$.  
Conversely, every atom $a\in{S}$ which is comparable to $\gamma$ 
is also comparable to $\beta$, the unique element which $\gamma$ 
covers since any chain containing $a$ and $\gamma$ must also contain 
$\beta$.  Therefore, 
$$
\ds\cplx{\alpha}'
=\bigcup_{\rho\ld{\alpha}}\sx{\rho}
={\sx{\gamma}}\cup\bigg(\bigcup_{\gamma\ne\rho\ld\alpha}\sx{\rho}\bigg)
={\sx{\beta}}\cup\bigg(\bigcup_{\gamma\ne\rho\ld\alpta}}\cup\bigg(\bigcup_{\gamma\ne\rho\ld\alpha}\sx{\rho}\bigg)
=\cplx{\alpha}.
$$  
Since this equality exists on the level of simplicial 
complexes, then $\Ho_i(\cplx{\alpha},\Bbbk)$ is left 
unchanged for every $i$ and $\alpha\in{P}$ when the 
element $\gamma$ is introduced to $P$ and homological 
analysis is instead performed on the new poset $P'$.  
\end{comment}

We have 
$$
\ds \Delta_\gamma'
=\bigcup_{\rho\ld{\gamma}}\G_\rho'
=\G_\beta'
=\G_\beta
$$ 
since $\gamma$ uniquely covers $\beta$ by construction.  
Thus, $\Delta_\gamma'$ is a cone with apex $\beta$ and hence 
contractible.  Therefore, $C_{i,\gamma}(P')=0$ for each $i$.  
Next, let $\delta\in P$.  If $\delta\not\ge\alpha$ then 
$\Delta'_\delta=\Delta_\delta$ and so 
$C_{i,\delta}(P')=C_{i,\delta}(P)$ for each $i$.  Thus, 
it remains to consider the case $\delta\ge\alpha$.  

Let 
$$
\Omega
=\str{\Delta'_\delta}{\{\alpha\}}
=\G'_\beta*\Delta\left([\alpha,\delta)\right)
=\G_\beta*\Delta\left([\alpha,\delta)\right).
$$  
Thus $\Delta'_\delta=\Delta_\delta\cup\Omega$ and 
$\Omega\cap\Delta_\delta=\G_\beta*\Delta\left([\alpha,\delta)\right)$ 
is a cone with apex $\beta$, hence contractible.  
The Mayer-Vietoris sequence in reduced homology on the 
triple $\big(\Delta_\delta,\Omega,\Delta'_\delta\big)$ yields 

\begin{displaymath}
\begin{CD}
\vdots\\
@VVV \\
\Ho_{i}\big(\Delta_\delta\cap\Omega,\Bbbk\big)\\
@VVV \\
\Ho_{i}\big(\Delta_\delta,\Bbbk\big)\oplus\Ho_{i}\big(\Omega,\Bbbk\big)\\
@VVV \\
\Ho_{i}\big(\Delta_\delta',\Bbbk\big)\\
@VVV \\
\Ho_{i-1}\big(\Delta_\delta\cap\Omega,\Bbbk\big)\\
@VVV \\
\vdots
\end{CD}  
\end{displaymath}

Since both $\Omega$ and $\Omega\cap\Delta_\delta$ are 
contractible we get the desired conclusion.  
\end{proof}

\begin{remark}
Replacing the poset $P$ with the ranked poset $P'$ 
may result in different sequences $C_\bullet(P)$ and 
$C_\bullet(P')$.  Indeed, although the components 
are identical by Lemma \ref{rankcanon}, the maps 
in the sequences are different in general.  In particular, 
the maps present in $C_\bullet(P')$ will in general have
more trivial components.
\end{remark}


\begin{thebibliography}{99} 

\bibitem{BPS} {\sc D. Bayer, I. Peeva, and B. Sturmfels}, \emph{Monomial Resolutions},  Math. Research Letters  \textbf{5} (1995), 31-46.

%\bibitem{Bj} A. Bj\"{o}rner, \emph{The homology and shellability of matroids and geometric lattices}, in Matroid Applications, N. White, ed., Cambridge University Press, Cambridge (1992), 226-283.

\bibitem{BjBrick} {\sc A. Bj\"{o}rner}, \emph{Topological methods}, in Handbook of Combinatorics (R. Graham, M.
Gr\"{o}tschel, L. Lov\'{a}sz eds.), p. 1819--1872, North-Holland, Amsterdam 1994.

%\bibitem{BH} {\sc W.~Bruns and J.~Herzog}, \emph{Cohen-Macaulay rings}, Cambridge Studies in Advanced Mathematics, {\bf39} Cambridge University Press, Cambridge, (1993).

\bibitem{BHres} {\sc W.Bruns and J. Herzog}, \emph{On multigraded resolutions}, Math. Proc. Cambridge Philos. Soc.  {\bf 118} (1995),  no. 2, 245--257.

%\bibitem{Ch} H.~Charalambous, \emph{Betti numbers of multigraded modules.}, J.~Algebra {\bf 137} (1991) no. 2, 491--500.

\bibitem{Clark} {\sc T. Clark}, \emph{A Minimal Poset Resolution of Stable Ideals}, In preparation.

%\bibitem{EH} {\sc R. Ehrenborg and G. Hetyei}, \emph{The topology of the independence complex}, European J. Combin. \textbf{27} (2006), 906-923.

%\bibitem{Eis} {\sc D. Eisenbud}, \emph{Commutative Algebra with a View Towards Algebraic Geometry}, Springer Verlag, New York 1995.

\bibitem{EK} {\sc S. Eliahou and M. Kervaire}, \emph{Minimal Resolutions of Some Monomial Ideals}, J. Algebra  \textbf{129} (1990), 1--25.

\bibitem{ER} {\sc J. Eagon and V. Reiner}, \emph{Resolutions of Stanley-Reisner rings and Alexander duality}, J.
Pure and Appl. Algebra {\bf 130} (1998), 265--275.

\bibitem{GPW} {\sc V. Gasharov, I. Peeva, and V. Welker}, \emph{The LCM-lattice in Monomial Resolutions},
Math. Res. Letters \textbf{6} (1999), 521--532.

\bibitem{HH} {\sc J. Herzog and T. Hibi}, \emph{Componentwise linear ideals}, Nagoya Math. J. Volume 153 (1999), 141--153. 

\bibitem{HHMT} {\sc J. Herzog, T. Hibi, S. Murai and Y. Takayama}, \emph{Componentwise linear ideals with minimal or maximal Betti numbers}, Ark. Mat., {\bf 46} (2008), 69--75

\bibitem{HRW} {\sc J. Herzog, V. Reiner and V. Welker}, \emph{Componentwise linear ideals and Golod rings}, Michigan Math. J. \textbf{46}, Issue 2 (1999), 211--223. 

\bibitem{JW} {\sc M. J\"ollenbeck and V. Welker}, \emph{Resolution of the residue field via algebraic discrete Morse theory}, Memoirs of the AMS (to appear).

%\bibitem{L} G. Lyubeznik, \emph{A New Explicit Finite Free Resolution of Ideals Generated by Monomials in an $R$-sequence}, Journal of Pure and Applied Algebra \textbf{51} (1988), 193-195.

%\bibitem{MS} E. Miller and B. Sturmfels, \emph{Combinatorial Commutative Algebra}, Graduate Texts in Mathematics; v. 227, New York: Springer, 2005.

%\bibitem{PV} I. Peeva and M. Velasco, \emph{Frames and Degenerations of Monomial Ideals}, Submitted.

\bibitem{MS} {\sc E. Miller and B. Sturmfels}, \emph{Combinatorial Commutative Algebra}, Graduate Texts in Mathematics, Springer Verlag, New York, 2004. 

\bibitem{Mun} {\sc J. Munkres}, \emph{Introduction to Algebraic Topology}, Perseus Books, 1984.

\bibitem{T} {\sc D. Taylor}, \emph{Ideals Generated by Monomials in an $R$-sequence}, Ph.D Thesis, University of Chicago (1960).

%\bibitem{Tch} A. Tchernev, \emph{Personal communication}.

%\bibitem{Vel} M. Velasco, Minimal free resolutions that are not supported by a CW-complex, Submitted.
\end{thebibliography}
\end{document}